

\documentclass[12pt]{article}
\usepackage{times}
\usepackage{amsmath,amsfonts,amstext,amssymb,amsbsy,amsopn,amsthm,eucal}
\usepackage{txfonts}
\usepackage{dsfont}
\usepackage{graphicx}   

\markright{5555}
\numberwithin{equation}{section}
\setcounter{secnumdepth}{1}
\setcounter{tocdepth}{1}
  \theoremstyle{plain}
 \newtheorem{theorem}[equation]{Theorem}
 
\newtheorem{proposition}[equation]{Proposition}
 \newtheorem{lemma}[equation]{Lemma}
 \newtheorem{corollary}[equation]{Corollary}

 \theoremstyle{remark}

 \newtheorem{remark}[equation]{Remark}

\theoremstyle{definition}
 \newtheorem{definition}[equation]{Definition}




\topmargin  =0.mm      
\oddsidemargin  =0.mm       
\evensidemargin =0.mm       

\setlength{\textheight}{8.75in} \setlength{\textwidth}{6.5in}
\setlength{\columnsep}{0.5in} \setlength{\topmargin}{0in}
\setlength{\headheight}{0in} \setlength{\headsep}{0in}
\setlength{\parindent}{1pc}
\setlength{\oddsidemargin}{0in}  
\setlength{\evensidemargin}{0in}

\newcommand{\ux}{\underline{x}}
\newcommand{\Ric}{{\rm Ric}}
\newcommand{\Vol}{{\rm Vol}}

\newcommand{\dN}{\mathds{N}}
\newcommand{\dR}{\mathds{R}}

\newcommand{\dZ}{\mathds{Z}}

\newcommand{\R}{{ \rm R}}

\newcommand{\cB}{\mathcal{B}}
\newcommand{\cC}{\mathcal{C}}
\newcommand{\cD}{\mathcal{D}}

\newcommand{\cM}{\mathcal{M}}
\newcommand{\cN}{\mathcal{N}}
\newcommand{\cR}{\mathcal{R}}
\newcommand{\cS}{\mathcal{S}}

\newcommand{\cV}{\mathcal{V}}
\newcommand{\cW}{\mathcal{W}}

\newcommand{\tv}{{\rm v}}

\begin{document}

\title{Lower Bounds on Ricci Curvature\\and\\Quantitative Behavior of Singular Sets}

\author{Jeff Cheeger\thanks{The author was partially supported by NSF grant DMS1005552 } and Aaron Naber\thanks{The author was partially supported by NSF postdoctoral grant 0903137}}
\date{}
\maketitle
\begin{abstract}
Let $Y^n$ denote the Gromov-Hausdorff limit 
$M^n_i\stackrel{d_{\rm GH}}{\longrightarrow} Y^n$ of $\tv$-noncollapsed
 riemannian manifolds  with ${\rm Ric}_{M^n_i}\geq-(n-1)$.
 The singular set $\cS\subset Y$ has a stratification
$\cS^0\subset\cS^1\subset\cdots\subset\cS$, where
 $y\in \cS^k$ if
no tangent cone at $y$ splits off a factor $\dR^{k+1}$ isometrically.
  Here, we define for all $\eta>0$, $0<r\leq 1$, the
{\it $k$-th effective singular stratum} $\cS^k_{\eta,r}$ satisfying
$\bigcup_\eta\bigcap_r \,\cS^k_{\eta,r}= \cS^k$.
Sharpening
the known Hausdorff dimension bound
$\dim\, \cS^k\leq k$, we prove that for all $y$,
the volume of  the $r$-tubular neighborhood of $\cS^k_{\eta,r}$
 satisfies
$\Vol(T_r(\cS^k_{\eta,r})\cap B_{\frac{1}{2}}(y))\leq c(n,\tv,\eta)r^{n-k-\eta}$.
The proof involves a  {\it quantitative differentiation} argument.
This result has applications to Einstein manifolds.
Let $\cB_r$ denote the set of points at which
the $C^2$-harmonic radius is $\leq r$.
If also the $M^n_i$ are K\"ahler-Einstein with $L_2$ curvature bound,
$||Rm||_{L_2}\leq C$, then
$\Vol( \cB_r\cap B_{\frac{1}{2}}(y))\leq c(n,\tv,C)r^4$ for all $y$. In
the K\"ahler-Einstein  case,  without assuming
any  integral curvature bound on the $M^n_i$, we obtain  a slightly weaker
 volume bound on $\cB_r$ which yields
 an  a priori $L_p$ curvature bound for all $p<2$.
 \end{abstract}
{\small \tableofcontents}
\pagebreak

\section{Volume bounds for quantitative singular sets}
\label{s:sr}
Let $(M^n,g)$ denote a riemannian manifold whose Ricci curvature satisfies
\begin{align}
\label{con:Lower_Ricci}
\Ric_{M^n}\geq -(n-1)g\, .
\end{align}
Let $\Vol_{-1}(r)$ denote the volume of a ball of
radius $r$ in $n$-dimensional hyperbolic space of curvature $\equiv -1$.
We will assume $M^n$ is $\tv$-noncollapsed i.e. for all $x\in M^n$,
\begin{align}
\label{con:Noncollapsed}
\frac{\Vol(B_1(x))}{\Vol_{-1}(1)}\geq \tv>0\, .
\end{align}

Let
\begin{equation}
\label{ghl}
M^n_i\stackrel{d_{\rm GH}}{\longrightarrow} Y^n\, ,
\end{equation}
denote the Gromov-Hausdorff limit
(possibly in the pointed sense)
 of a sequence of manifolds $M^n_i$
 satisfying (\ref{con:Lower_Ricci}),
(\ref{con:Noncollapsed}). In this case, the measured Gromov-Hausdorff
limit of the riemannian measures on the $M^n_i$ is \hbox{$n$-dimensional} Hausdorff
measure on
$Y^n$. We will simply denote it  by $\Vol(\, \cdot\,)$.

Relations (\ref{con:Lower_Ricci})--(\ref{ghl}) will be in force throughout
the paper.

 For $y\in Y^n$, every tangent
cone $Y_y$ is a metric cone $C(Z)$ with cross-section $Z$ and vertex $z^*$.
  A point $y$ is called {\it regular} if one (equivalently, every) tangent
cone is isometric to $\R^n$. Otherwise $y$ is called {\it singular}.  The set of
singular points is denoted by $\cS$. The stratum $\cS^k\subset \cS$ is defined
as the set of points for which no tangent cone splits off  isometrically a factor
$\R^{k+1}$. In fact, $\cS^{n-1}\setminus\cS^{n-2}=\emptyset$. Thus,
\begin{equation}
\label{strat}
\cS^0\subset\cS^1\subset\cdots\subset\cS^{n-2}=\cS\, .
\end{equation}
Moreover, in the sense of Hausdorff dimension, we have
\begin{equation}
\label{hd}
\dim \, \cS^k\leq k\, .
\end{equation}
For the all of the above, see \cite{cheegercoldingonthestruct1}.

Given $\eta>0$, $0< r<1$, we will define a quantitative version $\cS^k_{\eta,r}$
 of the singular stratum $\cS^k$. The criterion for membership of $y\in Y^n$ in
$\cS^k_{\eta,r}$ involves the behavior of $B_s(y)$ for all $r\leq s\leq 1$.
We will
show that  $\Vol(\cS^k_{\eta,r})\leq c(n,\tv,\eta)r^{n-k-\eta}$; see Theorem
\ref{t:S^k_Quantitative_Est}.

\begin{remark}
\label{r:sc}
In the special case, $Y^n=M^n$, with $M^n$ smooth, the sets $\cS^k$ are empty for
all $k$.
However, the sets $\cS^k_{\eta,r}$ need not be  empty.  In fact, in the proofs of all
of the estimates stated in this section, we can (and will) restrict attention to the
case
of smooth manifolds.
Since the measure on $Y^n$ is the
limit of the riemannian measures on the $M^n_i$, once proved for smooth manifolds,
the estimates
 pass immediately to Gromov-Hausdorff limit spaces.
\end{remark}

Denote by $(\underline{0},z^*)$, a vertex of the metric cone with isometric
splitting $\R^{k+1}\times C(Z)$. 

\begin{definition}
\label{d:strat}
 For $\eta>0$ and $0<r<1$, define the {\it $k$th effective singular stratum}
$\cS^k_{\eta,r}\subseteq Y^n$ by
$$
\cS^k_{\eta,r}:=\{y  \,\, |\,\,
d_{GH}(B_s(y),B_s((\underline{0},z^*)))
\geq\eta s, \,\,{\rm for\,\, all}\,\, \dR^{k+1}\times C(Z)\, \, {\rm and\,\,
all}\,\, r\leq s\leq 1\}\, .
$$
\end{definition}
It follows directly from the definition that
\begin{equation}
\label{containments}
\cS^{k}_{\eta,r}\subset \cS^{k'}_{\eta',r'}\qquad
({\rm if}\,\,k'\leq k,\,\,\eta'\leq\eta,\,\, r\leq r')\, .
\end{equation}
Also, if $y\in\cS^k$, then clearly, $y\in\bigcap_r\cS^k_{\eta,r}$, for some
$\eta>0$, so
\begin{equation}
\label{relation}
\cS^k=\bigcup_\eta \,\,\bigcap_r \, \cS^k_{\eta,r}\, .
\end{equation}

 Our first main result is a  volume bound for $\cS^k_{\eta,r}$.
The proof
will proceed by appropriately bounding the number of balls of radius $r$ needed to
cover
$\cS^k_{\eta,r}\cap B_1(\ux)$. Since by volume comparison, we have $\Vol(B_r(x))\leq
c(n)r^n$,
 so this will suffice.
\begin{theorem}
\label{t:S^k_Quantitative_Est}
There exists $c(n,\tv,\eta)>0$ such that if  $ M^n_i\stackrel{d_{\rm
GH}}{\longrightarrow} Y^n$,
and the $(M^n_i,g_i)$ satisfy the lower Ricci curvature bound
(\ref{con:Lower_Ricci}), and
$\tv$-noncollapsing condition (\ref{con:Noncollapsed}), then for all $y\in Y^n$ and
$\eta>0$,
\begin{align}
\label{mainve}
\Vol(\cS^k_{\eta,r}\cap B_{\frac{1}{2}}(y))\leq c(n,\tv,\eta)r^{n-k-\eta}.
\end{align}
\end{theorem}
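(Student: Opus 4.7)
The plan is a multi-scale covering argument combined with the principle of quantitative differentiation. The goal is to cover $\cS^k_{\eta,r}\cap B_{1/2}(y)$ by at most $C(n,\tv,\eta)\,r^{-k-\eta}$ balls of radius $r$; since each such ball has volume at most $c(n)r^n$ by Bishop-Gromov, the desired volume bound follows. By Remark \ref{r:sc}, I may restrict to the case where $Y^n=M^n$ is smooth.

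Fix $\alpha=\alpha(n,\eta)\in(0,1/2)$ (to be chosen) and let $r_j=\alpha^j$ for $0\leq j\leq N$, with $N\sim\log_\alpha r$. I build a sequence of covers $\cU_0,\cU_1,\ldots,\cU_N$ of $\cS^k_{\eta,r}\cap B_{1/2}(y)$, where $\cU_j$ consists of balls of radius $r_j$, refining ball-by-ball from scale $r_j$ to $r_{j+1}$. The central dichotomy is between \emph{good} and \emph{bad} scales at a point $x$. Setting $V_s(x):=\Vol(B_s(x))/\Vol_{-1}(s)$, which is monotone decreasing in $s$ by Bishop-Gromov, call $s$ bad at $x$ if
$$
V_{\alpha s}(x)\leq V_s(x)-\epsilon,
$$
for a fixed $\epsilon=\epsilon(n,\eta)$. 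Since $V_s(x)$ is bounded below by $\tv$ and above by $1$, each $x$ has at most $Q=Q(n,\tv,\epsilon)$ bad scales in $[r,1]$. This is the quantitative differentiation step.

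At any good scale, the Cheeger--Colding almost-volume-cone implies almost-metric-cone theorem forces $B_s(x)$ to be $\delta(\epsilon,n)$-Gromov--Hausdorff close to a metric cone $C(Z)$, with $\delta\to 0$ as $\epsilon\to 0$. Choosing $\epsilon$ small enough that $\delta<\eta/2$, membership in $\cS^k_{\eta,r}$ rules out cones splitting off $\R^{k+1}$, and hence $C(Z)\cong\R^j\times C(Z')$ with $j\leq k$ and $Z'$ non-splitting. The geometric input I then need is: under these hypotheses, $\cS^k_{\eta,r}\cap B_s(x)$ lies within distance $O(\alpha s)$ of the Euclidean axis $\R^j\times\{z^*\}$, and so is covered by at most $C(n)\alpha^{-k}$ subballs of radius $\alpha s$. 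On a bad ball I use the trivial Vitali bound $C(n)\alpha^{-n}$. Counting along the refinement tree gives
$$
|\cU_N|\leq\bigl(C(n)\alpha^{-n}\bigr)^{Q}\bigl(C(n)\alpha^{-k}\bigr)^{N-Q}\leq C(n,\tv,\eta)\,r^{-k-\eta},
$$
where $\alpha^{-(n-k)Q}$ is absorbed into the constant and the $r^{-\eta}$ slack arises from $C(n)^N$ when $\alpha$ is small enough relative to $\eta$.

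The principal obstacle is the geometric covering lemma at good scales, which is the core technical content. One must show that an $\eta$-effective $k$-singular point sitting in a ball that is $\delta$-close to a cone whose splitting factor is only $\R^j$ (for $j\leq k$) is itself close to the Euclidean axis of that cone. This demands promoting approximate cone structure to an approximate $\R^j$-splitting via a quantitative form of the Cheeger--Colding cone-splitting theorems, together with a careful induction on $j$ and tracking of the mutual dependencies among the parameters $(\epsilon,\delta,\eta,\alpha)$. Once this lemma is in hand, the remainder is bookkeeping.
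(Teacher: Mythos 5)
There is a genuine gap, and it is precisely the issue the paper itself flags as the crux of the argument. First, your key geometric input at a good scale is too strong: from closeness of $B_s(x)$ to $\R^{j}\times C(Z')$ with $Z'$ not splitting, you cannot conclude that all of $\cS^k_{\eta,r}\cap B_s(x)$ lies in an $O(\alpha s)$-tube around the axis. The cone-splitting mechanism (Lemma \ref{l:qs} and Corollary \ref{c:inductive}) forces an off-axis point to generate an extra $\R$-factor only if that point is itself the center of an almost-conical structure at the relevant scale, i.e.\ lies in the low-nonconicality set $L_{\gamma^{-n},\,\cdot\,,\epsilon}$; accordingly the paper's covering lemma (Lemma \ref{l:splitting}) covers only $B\cap\cS^k_{\eta,\,\cdot\,}\cap L_{\gamma^{-n},\,\cdot\,,\epsilon}$ by $c_0\gamma^{-k}$ balls. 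A point of $\cS^k_{\eta,r}$ that is bad at the current scale may sit far from the axis even though the ambient ball is nearly a cone, so your good-scale refinement step does not cover everything it must.

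Second, and relatedly, your count $|\cU_N|\leq\bigl(C\alpha^{-n}\bigr)^{Q}\bigl(C\alpha^{-k}\bigr)^{N-Q}$ presumes that along each branch of the refinement tree at most $Q$ scales are bad. But ``bad at scale $r_j$'' is a property of a point, and along a branch the point at which it is tested (the current center, or the various points of $\cS^k_{\eta,r}$ inside the current ball) changes from scale to scale; the quantitative differentiation bound gives at most $Q$ bad scales for each fixed point, not for a branch visiting different centers, so the exponent $Q$ on the trivial factor is unjustified. The paper resolves both problems simultaneously by decomposing $\cS^k_{\eta,\gamma^j}\cap B_1$ into the sets $E_{T^j}$ of points having exactly the same pattern of good/bad scales: within one such set every point being covered is conical at every scale the pattern declares good (so cone-splitting applies to all of them), the trivial $c_1\gamma^{-n}$ covering is used only on the at most $K$ scales declared bad, and the number of admissible patterns is at most $j^K\leq c(\gamma)(\gamma^j)^{-\eta/2}$, which is absorbed into the $r^{-\eta}$ slack. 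The authors state explicitly that without this decomposition they do not know how to complete the argument; your proposal omits it, so the bookkeeping step does not close as written.
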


\begin{remark}
It is an easy consequence of the definition
of $\cS^k_{\eta,r}$, that the bound in (\ref{mainve})
actually implies (for a slightly different constant $c(n,\tv,\eta)$)
\begin{equation}
\label{tubebound}
\Vol(T_r(\cS^k_{\eta,r}) \cap B_{\frac{1}{2}}(y)))\leq c(n,\tv,\eta)r^{n-k-\eta}\, ,
\end{equation}
 where $T_r(\cS^k_{\eta,r}) $
denotes the $r$-tubular neighborhood.
\end{remark}

\begin{remark}
There is a possibility that that on the right-hand side of (\ref{mainve}), the factor
$r^{n-k-\eta}$ can be replaced by one of the form $r^{n-k}(\log r)^{c(n,\tv,\eta)}$.
However, it seems unlikely that in general, the appearance of $\eta$ on the
right-hand side
of (\ref{mainve}) can be entirely removed. In the application to K\"ahler-Einstein
manifolds given in Theorem \ref{t:Effective_Curvature_Estimate2}, this
is of no consequence since the bound in (\ref{mainve})
controls a lower order term; compare
(\ref{curvestp}).
\end{remark}

\begin{remark}
As will be indicated in Sections \ref{s:outline},
\ref{s:ip}, the proof of Theorem \ref{t:S^k_Quantitative_Est} employs an instance of
{\it quantitative differentiation} in the sense of Section 14 of \cite{CKN}.
\end{remark}



\begin{definition}
\label{hrdef}
If $y\in Y^n$ and the metric is not $C^2$ in some neighborhood of $y$,
then  $r_{har}(y)=0$. Otherwise,
 $r_{har}(x)$ is  the supremum of
those $r$ such that the ball $B_r(y)$ is contained in the domain of a harmonic
coordinate system such that $g_{ij}(0)=\delta_{ij}$ and
$$
|g_{ij}|_{C^1}\leq r^{-1}\, ,\qquad |g_{ij}|_{C^2}\leq r^{-2}\, .
$$
\end{definition}
Recall that by elliptic regularity, there exist constants, $c(n,k)$
such that if
$M^n$ is Einstein, then for $g_{ij}$ as above, we have 
$$|g_{ij}|_{C^k}\leq c(n,k)r_{har}^{-k}\, .$$
Also, the curvature tensor $Rm$ satisfies
\begin{equation}
\label{hb}
\sup_{B_{r_{har}(y)}(y)}|Rm|\leq c(n)r_{har}^{-2}\, .
\end{equation}

Put
\begin{equation}
\label{cBdef}
\cB_r=\{y\, |\, r_{har}(y)\leq r\}\, .
\end{equation}

\begin{remark}
 Let $\widetilde\cB_r\subset Y^n$ denote the set of points such that either
$r_0(y)=0$ or $|Rm(y)|\geq c(n)r^{-2}$. In particular, $\widetilde\cB_r\subset \cB_r$.
Since $r_{har}$ is $1$-Lipschitz, it follows that
\begin{equation}
\label{tube2}
T_r(\widetilde\cB_r)\subseteq T_r(\cB_r)\subseteq \cB_{2r}\, .
\end{equation}
  Hence estimates
on $\Vol(\cB_r)$ imply estimates on $\Vol(T_r(\widetilde\cB_r))$. Equivalently,
in view of the noncollapsing assumption (\ref{con:Noncollapsed}),
estimates
on $\Vol(\cB_r)$  imply on the covering
number of $\widetilde\cB_r$.
\end{remark}

Under the additional assumption that the $M^n_i$ are Einstein and (for some of our
results)
 satisfy an integral curvature bound, we will apply Theorem
\ref{t:S^k_Quantitative_Est}
in combination with $\epsilon$-regularity theorems to control the
volume of the set $\cB_r$. In this case, we replace (\ref{con:Lower_Ricci}) by the
2-sided bound
\begin{align}
\label{con:Bounded_Ricci}
|\Ric_{M^n_i}|\leq n-1\, .
\end{align}
Our first  result of this type
follows  by combining Theorem \ref{t:S^k_Quantitative_Est}
with the $\epsilon$-regularity theorems, Theorem 6.2 of
\cite{cheegercoldingonthestruct1}
and Theorem 5.2 of \cite{MR1978491}\footnote{The latter is due independently to G.
Tian.};
the detailed argument is given in Section
\ref{s:Effective_Curvature_Estimates1}.

\begin{theorem}
\label{t:Effective_Curvature_Estimate1}
There exists $\eta_0=\eta_0(n,\tv)>0$ such that if $ M^n_i\stackrel{d_{\rm
GH}}{\longrightarrow} Y^n$,
and the $M^n_i$ are Einstein manifolds satisfying the $\tv$-noncollapsing
condition (\ref{con:Noncollapsed}), and the Ricci curvature bound
(\ref{con:Bounded_Ricci}),  then for every $0<r<1$:
\begin{enumerate}
\item If $\eta<\eta_0$, then we have $\cB_r\subset \cS^{n-2}_{\eta,r}\, .$
In particular, for all $y\in Y^n$,
\begin{equation}
\label{volest1}
\Vol(\cB_{r}\cap B_{\frac{1}{2}}(y))\leq c(n,\tv,\eta)r^{2-\eta}\, .
\end{equation}
\item If in addition, the $M^n_i$ are K\"ahler, then
we have $\cB_r\subset \cS^{n-4}_{\eta,r}$.
In particular, for all $y\in Y^n$,
\begin{equation}
\label{volest2}
\Vol(\cB_{r}\cap B_{\frac{1}{2}}(y))\leq c(n,\tv,\eta)r^{4-\eta}\, .
\end{equation}
\end{enumerate}
\end{theorem}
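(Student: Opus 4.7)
The plan is to reduce each part to a set inclusion of the form $\cB_r\subseteq\cS^{k}_{\eta,r}$, with $k=n-2$ for part 1 and $k=n-4$ for part 2, valid for all $\eta\le\eta_0(n,\tv)$. Granted this, (\ref{volest1}) and (\ref{volest2}) follow at once from Theorem \ref{t:S^k_Quantitative_Est}, since (\ref{con:Bounded_Ricci}) implies the one-sided bound (\ref{con:Lower_Ricci}); by Remark \ref{r:sc} I may work with smooth $Y^n=M^n$ throughout.

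For part 1 I argue contrapositively. If $y\notin\cS^{n-2}_{\eta,r}$, then by Definition \ref{d:strat} there exist a scale $s\in[r,1]$, a metric cone $\dR^{n-1}\times C(Z)$, and a vertex $(\underline{0},z^*)$ with
\begin{equation*}
d_{GH}\bigl(B_s(y),B_s((\underline{0},z^*))\bigr)<\eta s\, .
\end{equation*}
Rescaling $B_s(y)$ to unit size leaves the bound $|\Ric|\le n-1$ intact (since $s\le 1$) and brings this GH distance to $<\eta$. For $\eta\le\eta_0(n,\tv)$ sufficiently small, the $\epsilon$-regularity theorem of Cheeger--Colding (Theorem 6.2 of \cite{cheegercoldingonthestruct1}) then produces a harmonic coordinate chart of definite radius about $y$ satisfying Definition \ref{hrdef}; rescaling back gives $r_{har}(y)\ge s\ge r$, so $y\notin\cB_r$. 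Part 2 is entirely analogous, with $\dR^{n-1}\times C(Z)$ replaced by $\dR^{n-3}\times C(Z)$ and the Cheeger--Colding theorem replaced by its K\"ahler-Einstein refinement, Theorem 5.2 of \cite{MR1978491} (independently due to Tian), which sharpens the codimension: GH closeness of $B_s(y)$ to a cone with an isometric $\dR^{n-3}$-factor in a K\"ahler-Einstein manifold already yields a definite lower bound on $r_{har}(y)$, giving $\cB_r\subseteq\cS^{n-4}_{\eta,r}$.

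The one point requiring care is the matching of scales: the $\epsilon$-regularity theorems as classically stated deliver $r_{har}(y)\ge c(n)s$ rather than $r_{har}(y)>s$, so the stated inclusion $\cB_r\subseteq\cS^{k}_{\eta,r}$ holds only after either shrinking $\eta_0$ to gain a quantitative improvement in the regularity conclusion, or else accepting the weaker inclusion $\cB_r\subseteq\cS^{k}_{\eta,c(n)r}$ and absorbing the resulting factor $c(n)^{k+\eta-n}$ into $c(n,\tv,\eta)$ of Theorem \ref{t:S^k_Quantitative_Est}. This is a routine bookkeeping adjustment; the substantive content of the proof is wholly carried by Theorem \ref{t:S^k_Quantitative_Est} together with the two cited $\epsilon$-regularity theorems, and the main obstacle is really the verification that both of the thresholds $\eta_0$ supplied by those theorems depend only on $n$ and $\tv$, which is already built into their statements.
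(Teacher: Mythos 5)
Your proposal is correct and follows essentially the same route as the paper: argue contrapositively, noting that a point outside the effective stratum is $\eta s$-close at some scale $s\in[r,1]$ to a ball in a cone splitting $\dR^{n-1}$ (resp.\ $\dR^{n-3}$ in the K\"ahler case), invoke the $\epsilon$-regularity theorems of \cite{cheegercoldingonthestruct1} and \cite{MR1978491} to get $r_{har}(y)\geq c(n)s$, and then apply Theorem \ref{t:S^k_Quantitative_Est}. Your scale-matching caveat is exactly how the paper handles it as well (it proves $\cB_r\cap B_{\frac{1}{2}}(\ux)\subseteq T_{r}(\cS^{n-2}_{\eta,Cr})$ and absorbs the constant), so nothing is missing.
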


\begin{remark}
Conjecturally, in item 2. above, the K\"ahler assumption can be dropped.
\end{remark}

\begin{corollary}
\label{c:L^p_Curvature_Bound}
Let  $Y^n$ be as in Theorem \ref{t:Effective_Curvature_Estimate1}. Then:
 \begin{enumerate}
\item In case 1. of Theorem \ref{t:Effective_Curvature_Estimate1}, for every $0<p<1$,
$$
\fint_{B_{\frac{1}{2}}(y)}|Rm|^{p}\leq c(n)\cdot
\fint_{B_{\frac{1}{2}}(y)}(r_{har})^{-2p} <
c(n,\tv,p)
\qquad ({\rm for\,\,all}\,\,p<1)\, .
$$
\item In case 2. of Theorem \ref{t:Effective_Curvature_Estimate1},  for every $0<p<2$,
$$
\fint_{B_{\frac{1}{2}}(y)}|Rm|^{p}\leq c(n)\cdot
\fint_{B_{\frac{1}{2}}(y)}(r_{har})^{-2p}<
c(n,\tv,p)
\qquad ({\rm for\,\,all}\,\,p<2)\, .
$$
\end{enumerate}
\end{corollary}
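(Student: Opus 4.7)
The plan is to deduce the corollary from the curvature estimate \eqref{hb} and the volume bounds in Theorem \ref{t:Effective_Curvature_Estimate1} via a standard layer-cake (distribution function) argument.

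First, the pointwise bound \eqref{hb}, $|Rm(y)| \leq c(n)\, r_{har}(y)^{-2}$, gives immediately $|Rm|^p \leq c(n,p)\, r_{har}^{-2p}$ pointwise, which yields the first inequality in both items once we divide by $\Vol(B_{\frac{1}{2}}(y))$ (this volume is bounded below by $c(n,\tv)>0$ by Bishop--Gromov and the noncollapsing hypothesis \eqref{con:Noncollapsed}, so averages behave as expected).

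For the second inequality, I would apply the coarea/layer-cake formula to the nonnegative function $r_{har}^{-2p}$:
\begin{equation*}
\int_{B_{\frac{1}{2}}(y)} r_{har}^{-2p}\, d\Vol
= 2p\int_0^{\infty} \Vol\bigl(\{x\in B_{\frac{1}{2}}(y):r_{har}(x)<s\}\bigr)\, s^{-2p-1}\, ds.
\end{equation*}
The sublevel set $\{r_{har}<s\}$ is contained in $\cB_s$, so Theorem \ref{t:Effective_Curvature_Estimate1} supplies the key bounds: in case 1, $\Vol(\cB_s\cap B_{\frac{1}{2}}(y))\leq c(n,\tv,\eta)s^{2-\eta}$, and in case 2 the sharper bound $\leq c(n,\tv,\eta)s^{4-\eta}$. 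For $s\geq 1$ I would instead use the trivial bound by $\Vol(B_{\frac{1}{2}}(y))\leq c(n)$, whose contribution is $\int_1^\infty c(n)s^{-2p-1}ds<\infty$ for any $p>0$.

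The main (and essentially only) point of the argument is choosing $\eta$ to make the small-$s$ integral converge. Plugging the bounds in gives $\int_0^1 s^{1-2p-\eta}ds$ in case 1 and $\int_0^1 s^{3-2p-\eta}ds$ in case 2; these converge provided $2p+\eta<2$ and $2p+\eta<4$ respectively. Since Theorem \ref{t:Effective_Curvature_Estimate1} is valid for every $\eta>0$ (arbitrarily small, at the cost of a worse constant), for any given $p<1$ (respectively $p<2$) I simply fix $\eta$ small enough, e.g.\ $\eta=(1-p)$ in case 1 and $\eta=(2-p)$ in case 2. This yields the stated constant $c(n,\tv,p)$ and completes the proof. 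The only obstacle is bookkeeping the dependence of the constant on $p$ as $p\uparrow 1$ or $p\uparrow 2$, where the integral diverges; this is intrinsic to the estimate and explains why the corollary cannot be extended to the endpoint.
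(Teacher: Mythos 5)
Your argument is correct and is essentially the intended one: the paper states the corollary as an immediate consequence of Theorem \ref{t:Effective_Curvature_Estimate1} together with (\ref{hb}), via exactly this layer-cake estimate on $r_{har}^{-2p}$ with $\eta$ chosen small in terms of $p$. The only (trivial) adjustment is that the volume bounds of Theorem \ref{t:Effective_Curvature_Estimate1} are stated for $\eta<\eta_0(n,\tv)$, so one should take, say, $\eta=\min\{\eta_0/2,\,1-p\}$ (resp.\ $\min\{\eta_0/2,\,2-p\}$), which only improves the exponent $2-\eta$ (resp.\ $4-\eta$) and so does not affect convergence of the small-$s$ integral.
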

\begin{remark}
Theorem \ref{t:Effective_Curvature_Estimate1}
and Corollary \ref{c:L^p_Curvature_Bound} remain true
 assuming the Ricci curvature bound (\ref{con:Bounded_Ricci}) and
a bound on $|\nabla \Ric_{M^n}^i|$.  Alternatively,
If the $C^2$-harmonic radius is replaced by the
$C^{1,\alpha}$-harmonic radius, then
Theorem  \ref{t:Effective_Curvature_Estimate1}
and Corollary \ref{c:L^p_Curvature_Bound} hold with only the Ricci curvature bound,
$|\Ric_{M^n_i}|\leq n-1$.
\end{remark}

\begin{remark}
Even if $\cB_r$ were replaced by the smaller set $\widetilde\cB_r$, the assertions of
Corollary \ref{c:L^p_Curvature_Bound} would be new.
\end{remark}

In our next result (whose proof will be given in Section
\ref{s:Effective_Curvature_Estimates2}) we assume in addition, the  $L_p$ curvature
bound
\begin{align}
\label{con:L^p_Curvature_Bound}
\fint_{B_{1}(x)}|Rm|^p\leq C\, .
\end{align}
Recall in this connection, that for  K\"ahler-Einstein manifolds, we have the
topological
$L_2$ curvature bound
\begin{equation}
\label{kechbound}
\int_{M^n}|Rm|^2\leq c(n)\cdot\left(|(c^2_1\cup [\omega]^{(n/2)-2})(M^n)|
+|(c_2\cup[\omega]^{(n/2)-2})(M^n) | \right)\, ,
\end{equation}
where $c_1,c_2$ denote the first and second Chern classes and $[\omega]$
denotes the K\"ahler class; see e.g. \cite{MR1937830} and compare also the $L_p$
 bound ($p<2$) in item 2. of Corollary \ref{c:L^p_Curvature_Bound}, which holds
without assuming a bound on the right-hand side of (\ref{kechbound}).

\begin{theorem}
\label{t:Effective_Curvature_Estimate2}
\footnote{The results of
 the present paper arose in the course
of our ongoing investigations concerning the structure of Gromov-Hausdorff limit spaces
with Ricci curvature bounded below and in particular, on the structure of the
singular set
for limits of Einstein manifolds.  On the other hand, it has come to our attention
that Theorem \ref{t:Effective_Curvature_Estimate2} and Remark \ref{r:bigcomponent}
below
are stated as  conjectures (Hypothesis V and Supplements)
in an informal document  ``Discussion of the K\"ahler-Einstein problem''
written by S. Donaldson in the summer of 2009, available at
http://www2.imperial.ac.uk/~skdona/KENOTES.PDF.
It was announced there that the complex dimension 3 case of
Theorem \ref{t:Effective_Curvature_Estimate2}
would be treated in a
 forthcoming paper of Donaldson and X.  Chen; see \cite{chendonaldson1}.
The general case is treated in \cite{chendonaldson2}. Their work, like ours, makes use
of  \cite{cheegercoldingalmostrigidity}, \cite{cheegercoldingonthestruct1},
\cite{MR1937830}.  Unlike ours, it utilizes essentially
 a rigidity result for almost complex structures; see \cite{chakrabartishaw} .}
Let the assumptions be as in Theorem \ref{t:Effective_Curvature_Estimate1} and
assume in addition that the $M^n_i$ are K\"ahler-Einstein and satisfy the $L_p$
curvature bound (\ref{con:L^p_Curvature_Bound}), for some integer $p$, with
$2\leq p\leq \frac{n}{2}$.  Then for every $0<r<1$,
\begin{equation}
\label{curvestp}
\Vol(\cB_{r}\cap B_{\frac{1}{2}}(y))\leq c(n,\tv,C)r^{2p}\, .
\end{equation}
In particular, if the right-hand side of (\ref{kechbound}) is
bounded by $C$, then
\begin{equation}
\label{curvestch}
\Vol(\cB_{r}\cap B_{\frac{1}{2}}(y))\leq c(n,\tv,C)r^4\, .
\end{equation}
\end{theorem}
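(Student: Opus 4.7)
My plan is to reduce (\ref{curvestp}) to a covering count: bound the number of pairwise disjoint $r$-balls centered in $\cB_r\cap B_{\frac{1}{2}}(y)$ by combining the $L^p$ curvature hypothesis with an $\epsilon$-regularity whose contrapositive forces each such ball to carry a definite amount of $L^p$ curvature.

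\medskip\noindent
\emph{Step 1 (Integral $\epsilon$-regularity).} The key ingredient I would first establish is: there exists $\delta=\delta(n,\tv)>0$ such that whenever $M^n$ is K\"ahler-Einstein, $\tv$-noncollapsed, and $r_{har}(y)\leq r$, one has
$$ \int_{B_r(y)}|Rm|^p \;\geq\; \delta\, r^{n-2p} $$
for every integer $p$ with $2\leq p\leq n/2$. This should follow by a compactness-contradiction argument: failure produces a sequence $(M_j,y_j)$ violating the inequality which, after rescaling by $r_j^{-2}$, converges in the pointed Gromov-Hausdorff sense to a limit $(Y^n_\infty,y_\infty)$ which is $\tv$-noncollapsed, K\"ahler, and (the Einstein constants having rescaled to zero) Ricci-flat, with $\int_{B_1(y_\infty)}|Rm|^p=0$ but $r_{har}(y_\infty)\leq 1$. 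Vanishing curvature on the regular part of $Y^n_\infty$ together with K\"ahler codimension-$4$ regularity from \cite{cheegercoldingonthestruct1}, \cite{MR1978491} forces $Y^n_\infty=\R^n$; then Theorem 5.2 of \cite{MR1978491} (the K\"ahler $\epsilon$-regularity used in item 2 of Theorem \ref{t:Effective_Curvature_Estimate1}) applied to the isometric splitting $\R^n=\R^{n-3}\times\R^3$ gives $r_{har}(y_\infty)=\infty$, a contradiction.

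\medskip\noindent
\emph{Step 2 (Vitali covering).} By the standard Vitali covering lemma applied to $\{B_r(z):z\in\cB_r\cap B_{\frac{1}{2}}(y)\}$, I extract a pairwise disjoint subcollection $\{B_r(y_i)\}_{i=1}^N$ whose $5r$-dilates still cover $\cB_r\cap B_{\frac{1}{2}}(y)$. Each $B_r(y_i)\subset B_1(y)$, so (\ref{con:L^p_Curvature_Bound}) and Bishop-Gromov volume comparison give $\int_{B_1(y)}|Rm|^p\leq c(n)C$. Summing Step 1 over the disjoint collection yields
$$ \delta N r^{n-2p} \;\leq\; \sum_{i=1}^N\int_{B_r(y_i)}|Rm|^p \;\leq\; c(n)C, $$
so $N\leq c(n,\tv,C)\,r^{2p-n}$. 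Volume comparison on each $B_{5r}(y_i)$ then yields
$$ \Vol\bigl(\cB_r\cap B_{\tfrac{1}{2}}(y)\bigr) \;\leq\; \sum_{i=1}^N\Vol\bigl(B_{5r}(y_i)\bigr) \;\leq\; c(n)Nr^n \;\leq\; c(n,\tv,C)\,r^{2p}, $$
which is (\ref{curvestp}). The K\"ahler-Einstein conclusion (\ref{curvestch}) is then the specialization to $p=2$, using (\ref{kechbound}) to supply the $L^2$ bound from Chern-topological data.

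\medskip\noindent
\emph{Main obstacle.} The genuinely delicate part is Step 1 in the range $p<n/2$, where Anderson's classical $L^{n/2}$-$\epsilon$-regularity is insufficient. The K\"ahler codimension-$4$ regularity must be used in an essential way to rule out the blow-up limit being a singular flat cone $\R^k\times C(\dS^{n-k-1}/\Gamma)$ rather than $\R^n$ itself; this is exactly where the structural results of \cite{cheegercoldingonthestruct1} and \cite{MR1978491} enter essentially. Once Step 1 is established, Step 2 is a routine covering argument.
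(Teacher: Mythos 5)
Your Step 2 is a routine covering argument and is essentially how the paper itself bounds the ``energy-bad'' set, but the entire weight of your proposal rests on Step 1, and the proof you sketch for it does not work. What you are claiming there is a \emph{pure-energy} $\epsilon$-regularity: smallness of the scale-invariant $L_p$ curvature energy alone, with $p<n/2$, forces $r_{har}(y)\geq c\,r$. The $\epsilon$-regularity actually available (Theorem 5.2 of \cite{MR1978491}, as quoted in Section \ref{s:Effective_Curvature_Estimates2}) requires, \emph{in addition} to the energy condition (\ref{e:sic}), the Gromov--Hausdorff hypothesis (\ref{e:cc}) that $B_r(x)$ be $\eta_0 r$-close to a ball in $\dR^{n-2p}\times C(Z)$, and the paper emphasizes that arranging these two conditions simultaneously at a common scale is the main difficulty of the theorem. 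Your compactness argument does not remove the splitting hypothesis: (i) the $L_p$ energy is only lower semicontinuous under noncollapsed limits, so along your contradiction sequence the curvature can concentrate into the limit's singular set and be lost; that the limit is flat on its regular part therefore yields no contradiction with the hypothesis that the energies tend to $0$; (ii) ``flat on the regular part with singular set of codimension four'' does not force $Y_\infty=\dR^n$: the flat cones $\dR^{n-4}\times C(\dS^3/\Gamma)$ --- precisely the models of curvature concentration for noncollapsed K\"ahler--Einstein sequences, e.g.\ $\dR^{n-4}$ times a shrinking Eguchi--Hanson space --- are flat away from a codimension-four set and are fully compatible with the K\"ahler codimension-four regularity of \cite{cheegercoldingonthestruct1}, \cite{MR1978491}, so $r_{har}(y_\infty)\leq 1$ produces no contradiction. (A smaller point: the rescaled Einstein constants tend to zero only if $r_j\to 0$, which you have not arranged.) Whether the statement of Step 1 is in fact true is beside the point: it is not a consequence of the cited $\epsilon$-regularity plus soft compactness, and as stated it is at least as strong as the estimate you are trying to prove, so assuming it begs the question.

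Symptomatically, your argument never invokes Theorem \ref{t:S^k_Quantitative_Est}, which is exactly the tool the paper uses to handle the locations and scales at which the splitting hypothesis (\ref{e:cc}) fails. The paper's proof covers $\cB_{\gamma^j}\cap B_{\frac12}(\ux)$ by the quantitative stratum $\cS^{n-2p-1}_{\eta_0,\gamma^j}$, the energy-bad sets $\cD_{\epsilon_0,\gamma^j}$, and the transition sets $\cS^{n-2p-1}_{\eta_0,\gamma^j,\gamma^i}\cap \cD_{\epsilon_0,\gamma^{i-1}}$ (the inclusion (\ref{e:sscover})); on the complement there is some scale between $\gamma^j$ and $1$ at which \emph{both} hypotheses (\ref{e:cc}) and (\ref{e:sic}) hold simultaneously, and only then is the $\epsilon$-regularity theorem applied. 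The exponent bookkeeping ($2p+1-\eta_0>2p$ for the terms controlled by Theorem \ref{t:S^k_Quantitative_Est}, via the rescaled Corollary \ref{rescaled}) is what allows the clean $r^{2p}$ in (\ref{curvestp}) with no $\eta$-loss. To repair your proposal you would need to replace Step 1 by such a multiscale argument combining the stratification bound with the energy bound, rather than by a compactness claim that eliminates the splitting hypothesis.
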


It is of key importance that $\eta$  does not appear on the right-hand side of
(\ref{curvestp}), (\ref{curvestch}); compare (\ref{hd}), (\ref{mainve}),
 (\ref{volest1}), (\ref{volest2}). Let us indicate how this
comes about.

Note that the estimates in (\ref{curvestp}), (\ref{curvestch}),
strengthen the known bounds on the Hausdorff measure $\mathcal H^{n-2p}(\cS^{n-2p})$
which in particular is finite; see \cite{MR1937830}, \cite{MR1978491}.
Those bounds are obtained
by combining standard maximal function estimates for the $L_p$ norm of the curvature
with the certain \hbox{$\epsilon$-regularity} theorems to estimate
 $\mathcal H^{n-2p}(\cS^{n-2p}\setminus \cS^{n-2p-1})$, and then using
(\ref{hd}), $\dim\, \cS^{n-2p-1}$\linebreak
$\leq n-2p-1$, which implies
$\mathcal H^{n-2p}(\cS^{n-2p-1})=0$.

 In fact, a slight modification of the first part of the argument
 gives the leading term on the right-hand side of
 (\ref{curvestch}), whereas the terms controlled by
Theorem \ref{t:S^k_Quantitative_Est}, which are lower order,
can be (and are) suppressed. The bound on these terms
(which requires the hypothesis of Theorem \ref{t:Effective_Curvature_Estimate2})
 can be viewed
as strengthened version of the estimate $\mathcal H^{n-2p}(\cS^{n-2p-1})=0$.

 More specifically,
 Theorem
\ref{t:S^k_Quantitative_Est} is only used to control the volumes of certain
subsets of $\cS^{n-2p-1}_{\eta_0,\gamma^{-i}}$, where $r\leq \gamma^{-i}\leq 1$,
 $1>\eta_0=\eta_0(n)>0$ is  sufficiently small and $\gamma=\gamma(\eta_0)$.
(The precise meaning of ``sufficiently small'' is dictated by the
constant in the $\epsilon$-regularity theorem of Section 5 of \cite{MR1978491}.)
 For instance, in the extreme case in which $\gamma^{-(i+1)}\leq r$,
we have
$\Vol(\cS^{n-2p-1}_{\eta_0,r})\leq c(n,\tv,\eta_0)r^{2p+1-\eta_0}$ and the sum of
the remaining
terms satisfies a bound of the same form.
Since $2p+1-\eta_0>2p$, the volume bound on these terms
can be suppressed.

\begin{remark}
 In the proof of Theorem \ref{t:Effective_Curvature_Estimate1}
by contrast,
Theorem \ref{t:S^k_Quantitative_Est} is used to control the highest order term.
\end{remark}

\begin{remark}
It is possible that Theorem \ref{t:Effective_Curvature_Estimate2} holds for Einstein
manifolds
 which are not necessarily K\"ahler. In any case, if $p$ is an even integer, then
apart from
some exceptional cases,  the $\epsilon$-regularity theorems of Section 8 can be used to
 show that (\ref{curvestp}) holds.  For $p$ an integer, using Section 4 of
\cite{MR1978491},
one gets (with no exceptional cases and all $\eta>0$) the less sharp estimate
$$
\Vol(\cB_{r}\cap B_{\frac{1}{2}}(y))\leq
c(n,\tv,\eta,C)r^{2p-\eta}\, .
$$
 \end{remark}

\begin{remark}
\label{r:bigcomponent}
Among the connected components of \hbox{$B_{\frac{1}{2}}(y)\setminus \cB_r$},
there is a component $\hat A_r$, such that
\begin{equation}
\label{e:connectedness}
\Vol(B_{\frac{1}{2}}(y)\setminus \hat A_r)\leq c(n,\tv,C)r^{\frac{(2p-1)n}{n-1}}\, .
\end{equation}
 To see this note that as previously mentioned, $B_{\frac{1}{2}}(y)\setminus
\cB_r\subset \cC_r$,
for some subset $\cC_r$ which is the union of at most $c(n,\tv,C)r^{-2p}$
balls of radius $r$. Moreover, for $r=r_0(n,\tv,\eta)$ sufficiently small,
there exists
$B_{r_0}(y')\subset \left(B_{\frac{1}{2}}(y)\setminus \cC_{r_0}\right)$.
 For $r\leq r_0$, let $A_r\subset \left(B_{\frac{1}{2}}(y)\setminus \cC_{r}\right)$
denote the component containing $B_{r_0}(y')$.
Clearly, $\Vol_{n-1}(\partial \cC_r)\leq c(n,\tv,C)r^{2p-1}$ and in particular,
$\Vol_{n-1}(\partial A_r)\leq c(n,\tv,C)r^{2p-1}$.
Since $\Vol(A_r)\geq \Vol(B_{r_0}(y'))$ (a definite lower bound) the isoperimetric
inequality for manifolds satisfying (\ref{con:Lower_Ricci}),
(\ref{con:Noncollapsed}), gives
$\Vol(B_{\frac{1}{2}}(y)\setminus A_r)\leq c(n,\tv,C)r^{\frac{(2p-1)n}{n-1}}$.
This implies (\ref{e:connectedness}).
\end{remark}

\begin{remark}
As briefly indicated in the discussion following the statement of Theorem
\ref{t:Effective_Curvature_Estimate2},
in proving that theorem,
 the  $\epsilon$-regularity theorem must be applied on all scales between
$1$ and $r$. Here, the fact that
 the hypothesis of the relevant $\epsilon$-regularity requires that
 two distinct conditions must be satisfied {\it simultaneously}  raises an
 issue that does not arise
in the proof of Theorem \ref{t:Effective_Curvature_Estimate1};
for details,  see Section \ref{s:Effective_Curvature_Estimates2}.
\end{remark}

\section{Outline of the proof of Theorem \ref{t:S^k_Quantitative_Est}}
\label{s:outline}

Since the methodology used in proving Theorem \ref{t:S^k_Quantitative_Est}
is new and is applicable in many other contexts (see e.g.
\cite{cnminhar})
we will give an informal explanation of the main ideas.

To prove Theorem \ref{t:S^k_Quantitative_Est}, we exhibit
$\cS^k_{\eta,r}$ as a  generalized Cantor set.  In particular,
we show that at most locations and scales $\geq r$,
there exists $\ell\leq k$, such that
$\cS^k_{\eta,r}$  lies very close to a $k$-dimensional subset of
the form  $\R^\ell\times \{z^*\}$, where $\R^\ell$ is a
factor  of an approximate local isometric splitting.
 Once this has been done, the volume computation is an essentially
 standard induction argument
based on iterated ball coverings.

 The following toy example illustrates our approach
in  highly  simplified situation corresponding to  the case $\cS^0$.
Notably,  a significant issue which must be
addressed in the actual situation
is not present in the toy example; see the subsection below entitled
``Implementation of cone-splitting''.

Start with the interval $[0,1]$ (so in effect, we are pretending
that $n=1$, although this plays no essential role).
Remove a subinterval from the center, then remove central subintervals
from each of the two remaining subintervals, etc.. Fix $\eta>0$.
We chose the lengths
of the $2^i$ distinct subintervals which remain at the $i$-th stage to be
$r_i=t_1\cdots t_i$, where we assume that for some $i(\eta)<\infty$, we have
$t_i^\eta\leq \frac{1}{2}$
for all $j>i(\eta)$.
Denote the generalized Cantor set which is intersection of this sequence of subsets
by $C$. The following volume estimate strengthens the
Hausdorff dimension estimate $\dim \, C\leq \eta$.

 Set
$ \max_{i\leq i(\eta)}2^i r_{i}^\eta= c(\eta)$.
Then  $2^jr_j^\eta\leq c(\eta)$ for any $j\geq i(\eta)$.
 For all $j$, we  have
$$
\begin{aligned}
\Vol(T_{r_j}(C))&\leq 2^j\cdot r_j\\
                         &\leq (2^j\cdot r_j^\eta)\cdot r_j^{1-\eta}\\
&\leq c(\eta)r_j^{1-\eta}\, ,
\end{aligned}
$$
which easily implies the same estimate with $r_j$ replaced
by any $r\leq 1$ and $c(\eta)$ replaced by $2\cdot c(\eta)$.

\subsection{The inequality $\dim\, \cS^k\leq  k$. }
Next we recall from  \cite{cheegercoldingonthestruct1}, the proof of the inequality
$\dim\, \cS^k\leq k$.
The proof  relies on an iterated blow up argument.
 The  following geometric facts are used.
 i) For limit spaces satisfying (\ref{con:Lower_Ricci}),
(\ref{ghl}), every
 tangent cone $Y_y$ is a metric cone. ii) The splitting theorem holds for
such
tangent cones (even for those which are collapsed).

Consider first the case $k=0$. By a density argument, if
$\dim\, \cS^0=0$
were to fail, it would already fail for some tangent cone
$Y_y$ for which the vertex
$y_\infty^*$, is a density point of $ \cS^0(Y_y)$. Thus, there would exist
$y'_\infty\in \cS^0(Y_y)$, a density point of $ \cS^0(Y_y)$, with $y'_\infty\ne
y_\infty^*$.
Moreover, by same the reasoning,  the assertion
would  fail in the same way
for some tangent cone $(Y_y)_{y'_\infty}$ at $y'_\infty$.
But since $y'_\infty\ne y_\infty^*$, by i), $y'_\infty$ is an interior point of
a ray emanating  from  $y_\infty^*$. After blow up at $y'_\infty$,
we obtain  a line
in $(Y_y)_{y'_\infty}$ and a density point $(y'_\infty)_\infty$ of
$\cS_0$ lying on this line. By ii), this line splits off isometrically, which
contradicts
$(y'_\infty)_\infty\in \cS_0$.
Similarly, by employing
additional blow ups, one gets $\dim\, \cS^k\leq k$ for all $k$; for further details,
see
\cite{cheegercoldingonthestruct1}.

\subsection{An issue involving multiple scales.}
Proving Theorem
\ref{t:S^k_Quantitative_Est} requires either finding a quantitative version of the
preceeding (noneffective) blow up argument,
or finding a different argument which can in fact be made quantitative. It is natural
to investigate the following idea for quantifying the blow up argument:
 Rather than doing
multiple blow ups to split off additional lines as isometric factors,
do an  ``appropriate'' sequence of rescalings which stop   short of going to the
blow up  the limit.
 The difficulty is that this leads to a sequence balls whose
radii decrease very rapidly and the resulting issue of having
to work simultaneously on a sequence of different scales.
 In fact,
it is not clear to us how to resolve the quantitative issues which arise from this approach.

Instead of blow up we use a different principle, the ``cone-splitting principle''. 
When its hypotheses are satisfied, the cone-splitting gives rise  to an ``additional
splitting''
of a single cone on a fixed scale. We show that in our context,
the hypotheses are indeed satisfied at most locations and scales.
In particular, this gives a new proof that $\dim\, \cS^k\leq k$ (though of course,
the quantitative version that we actually prove is much stronger).

\subsection{Cone-splitting, a replacement for  blow up.}

In its nonquantitative form, the cone-splitting principle gives a criterion
which guarantees that a metric cone $\R^\ell\times C(Z)$, which splits off
a Euclidean factor $\R^\ell$, actually splits off a factor of $\R^{\ell+1}$.
(Here and in the next paragraph, all splittings are isometric and
 $C(Z)$ denotes a metric
cone with vertex $z^*$.)
\vskip2mm

\noindent
{\bf Cone-splitting:}
Suppose that for some $C(\underline{Z})$
 with vertex $\underline{z}^*$, there is an isometry
$I:\R^\ell\times C(Z)\to C(\underline{Z})$ such that
$\underline{z}^*\not\in I(\R^\ell\times \{z^*\})$.  Then for some $W$,
 $\R^\ell\times C(Z)$ is isometric to $R^{\ell+1}\times C(W)$.\footnote{For our
purposes, we only need the cone-splitting principle
for tangent cones,  which case it
follows from the splitting theorem of \cite{cheegercoldingalmostrigidity}.
In fact, by an elementary argument (which we omit)
the cone-splitting principle holds for arbitrary metric cones. We do not
know an explicit reference for this fact.}

To see the relevance, note that in the proof of $\dim \cS^0=0$ which
was recalled above, if we knew that $y'_\infty\neq y$ was the vertex of some other
cone structure on $Y_y$, then $Y_y\equiv \dR\times Y'_y$.  Thus, we would obtain the
required ``additional splitting'' without the necessity of passing to a blow up.  In
actuality, we need the following quantitative version,
which is stated somewhat informally; for the precise statement,
see  Lemma \ref{l:qs}.
\vskip2mm

\noindent
{\bf Quantitative version of the cone-splitting principle:}
Consider a metric ball $B_r(p)$ and
 for $\delta=\delta(\eta)$
sufficiently small, a $\delta r$-Gromov-Hausdorff
equivalence $J_\delta: B_r(p)\to B_r((\underline{0},z^*)\subset
\R^\ell\times C(Z)$. Also assume  for some $q\in B_r(p)$, that
$J_\delta(q)$ does not lie too close to $J_\delta( \R^\ell\times \{z^*\})$,
Finally, assume that there is a $\delta r$-Gromov-Hausdorff equaivalence
$J'_\delta:B_r(q)\to B_r(\underline{z}^*)\subset C(\underline{Z})$.
Then $B_r(p)$ is $\eta r$-Gromov-Hausdorff
close to a ball $B_r((\underline{0},w^*))\subset\R^{\ell+1}\times  C(W)$,
for some cone $ \R^{\ell+1}\times C(W)$.

\subsection{Implementation of cone-splitting.}

As noted above, if we knew that $y'_\infty$ was the vertex of some
(other) cone structure on $Y_y$, then
we would obtain the required ``additional splitting'' without the necessity
of passing to a blow up. Roughly speaking, to implement the quantitative version of
cone-splitting,  we need
to know that this holds approximately at most locations and scales.

In fact, given a suitable notion of scale, $\gamma<1$, then for each $x$,
the balls $B_{\gamma^i}(x)$
($i=0,1,\ldots$) look as conical as we like (with $x$ playing the role of the vertex)
on all but a definite number of scales
$\gamma^i$.
This statement, which is close to being implicit in \cite{cheegercoldingonthestruct1},
is a quantitative version of the fact that tangent cones
are metric cones. It
constitutes  a  ``quantitative differentiation'' theorem  in the sense of  Section
14 of
\cite{CKN}.

Were it not for the fact that the collection of excluded scales
(those scales $\gamma^i$ for which $B_{\gamma^i}(x)$ is not sufficiently close to looking conical)
might depend on the point $x$,
we could use the cone-splitting principle to show that $\cS^k_{\eta,\gamma^j}$
``looks as $k$-dimensional as we like'' on all but a definite number of scales.
Since there is a bound on the number of excluded scales this easily suffices to
complete the Cantor type volume computation. This amounts to inductively
bounding the number of balls of radius $\gamma^j$ needed to cover
$\cS^k_{\eta,\gamma^j}$.
The general volume bound for  $\cS^k_{\eta,r}$, ${}$ $0<r\leq 1$, follows directly
from the
case $r=\gamma^j$.

In order to deal with
the above mentioned difficulty, we decompose the space into  subsets,
each of which consists of those points with precisely the same collection of excluded
scales. The bound on the number of excluded scales
 has the additional consequence that there are ``not to many'' of these subsets.
To each such set, we can apply the argument based on cone-splitting. Since there
``not to many'' such sets, we can simply add the resulting estimates. This finishes
the proof.
(Without bringing in this decomposition, we do not know how to complete
the argument.)

\section{Reduction to the covering lemma}
\label{s:ip}

As noted at the beginning of Section \ref{s:sr}, in proving Theorem
\ref{t:S^k_Quantitative_Est},
we can
(and will) restrict attention to the case of smooth manifolds.
Suppose for some convenient choice
$\gamma=\gamma(\eta)<1$, we can prove (\ref{mainve}) with some
constant $\tilde c(n,\tv,\eta)$ and
all $r$ of the form $\gamma^j$. Given $r$ arbitrary,
by choosing $j$ such that $\gamma^{j+1}< r\leq \gamma^j$,
we obtain (\ref{mainve}) for this $r$ with constant
$c(n,\tv,\eta)=\tilde c(n,\tv,\eta)(\gamma(\eta))^{-(n-k-\eta)}$.
Thus, in proving (\ref{mainve}), we can (and will) consider only $r$ of the form
$\gamma^j$.

An appropriate choice of $\gamma$ is given in (\ref{e:fixgamma}).
Lemma \ref{l:reduction} below (the covering lemma) asserts that the the set
$\cS^k_{\eta,\gamma^j}$ can be covered by a collection of sets,
$\{\cC^k_{\eta,\gamma^j}\}$, each of which consists of a not too large collection of
balls of radius $\gamma^j$.
 The cardinality of the collection $\{\cC^k_{\eta,\gamma^j}\}$
goes to infinity $\cC^k_{\eta,\gamma^j}$ as $j\to \infty$. However,
by Lemma \ref{l:reduction}, the growth rate is $\leq j^{K(\eta,\tv,n)}$, which is 
slow enough to be negligible for our purposes.  This estimate follows from a
quantitative differentiation argument.

 The criterion for membership in each particular set
 $\cC^k_{\eta,\gamma^j}$ represents one of the
possible behaviors on the scales $1,\gamma, \gamma^2, \ldots,\gamma^j$,
which could cause a point to lie in $\cS^k_{\eta,\gamma^j}$.
for $i\leq j$).
A priori, the number of such different behaviors is  $ 2^j$.
However, as explained above, for any fixed $M^n$, $k$, $\eta$,
only a  small fraction $\leq j^{K(\eta,\tv,n)} \cdot 2^{-j}$ of these can
actually occur.

\begin{proof}[Proof of Theorem \ref{t:S^k_Quantitative_Est}]
Let $\underline{x}\in M^n$ and consider $\cS^k_{\eta,r}\cap
B_{\frac{1}{2}}(\underline{x})$ for some fixed
$\eta>0$ as in (\ref{mainve}).  For $c_0=c_0(n)>1$ to be specified below, put
\begin{equation}
\label{e:fixgamma}
\gamma=\gamma(\eta)= c_0^{-\frac{2}{\eta}}\, .
\end{equation}

\begin{lemma}
\label{l:reduction}
There exists $c_1=c_1(n)\geq c_0$,
$K=K(n,\tv,\gamma)$,
$Q=Q(n,\tv,\gamma)=K+n$,
such that for every $j\in\dZ_+$:
\begin{enumerate}
\item The set $\cS^k_{\eta,\gamma^j}\cap B_1(\ux)$ is contained in the union of at most
$j^K$ nonempty sets $\cC^k_{\eta,\gamma^j}$.
\item Each set $\cC^k_{\eta,\gamma^j}$  is the union of at most
$(c_1\gamma^{-n})^{Q}\cdot (c_0\gamma^{-k})^{j-Q}$ balls of radius $\gamma^j$.
\end{enumerate}
\end{lemma}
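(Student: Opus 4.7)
The plan is to partition $\cS^k_{\eta,\gamma^j}\cap B_1(\ux)$ according to which of the scales $\gamma^0, \ldots, \gamma^j$ are approximately conical with vertex at $x$, and then apply the quantitative cone-splitting principle (Lemma \ref{l:qs}) on the remaining scales to run a multiscale ball-counting induction. Fix $\delta = \delta(\eta, n) > 0$ small enough that up to $k$ iterations of the cone-splitting lemma stay within the error tolerance $\eta$. For $x \in B_1(\ux)$ and $0 \leq i \leq j$, declare $i$ \emph{good at $x$} if $B_{\gamma^i}(x)$ is $\delta\gamma^i$-Gromov--Hausdorff close to a ball $B_{\gamma^i}(z^*)$ in some metric cone $C(Z)$ with vertex $z^*$, and \emph{bad} otherwise. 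Let $T(x) \subseteq \{0, 1, \ldots, j\}$ be the bad set, and for each subset $T$ let $\cC^k_{\eta,\gamma^j}(T)$ collect the points with $T(x) = T$.

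The crucial quantitative-differentiation input, close to being contained in \cite{cheegercoldingonthestruct1}, is the bound $|T(x)| \leq Q_0 = Q_0(n, \tv, \delta)$ for every $x \in B_1(\ux)$: each bad scale forces a definite drop in Bishop--Gromov's monotonic volume-ratio functional (or its cone-normalized variant), and since this functional is uniformly bounded between two constants, pigeonhole limits the number of drops. The number of subsets $T \subseteq \{0, \ldots, j\}$ with $|T| \leq Q_0$ is at most $(j+1)^{Q_0} \leq j^K$ for $K = K(n, \tv, \gamma)$ large, which gives item~1 once item~2 is proved. Set $Q := Q_0 + n$.

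For item~2, fix $T$ and build a cover of $\cC^k_{\eta,\gamma^j}(T)$ inductively in the scale index $i$. Starting from $B_1(\ux)$, at stage $i \to i+1$ each current $\gamma^i$-ball $B_{\gamma^i}(x_\alpha)$ is refined into $\gamma^{i+1}$-balls centered on its intersection with $\{x : T(x) = T\} \cap \cS^k_{\eta,\gamma^j}$. If $i \in T$, the trivial Bishop--Gromov estimate gives at most $c_1(n)\gamma^{-n}$ refinement balls. If $i \notin T$, both $x_\alpha$ and every candidate center $y \in B_{\gamma^i}(x_\alpha)$ carry approximate cone structures on scale $\gamma^i$; I claim every such $y$ must lie within $\delta\gamma^i$ of an approximate $k$-dimensional Euclidean subspace through $x_\alpha$. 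If not, one could inductively apply the cone-splitting lemma at most $k+1$ times, each time augmenting the known Euclidean factor of $B_{\gamma^i}(x_\alpha)$ by one dimension using a cone vertex $y$ off the current factor, eventually concluding that $B_{\gamma^i}(x_\alpha)$ is $\eta\gamma^i$-close to a ball in some $\R^{k+1} \times C(W)$ --- contradicting $x_\alpha \in \cS^k_{\eta,\gamma^j}$ since $\gamma^i \geq \gamma^j \geq r$. Hence the refinement centers can be covered by at most $c_0(n)\gamma^{-k}$ balls. Multiplying the per-step factors yields at most $(c_1\gamma^{-n})^{|T|}(c_0\gamma^{-k})^{j - |T|} \leq (c_1\gamma^{-n})^{Q}(c_0\gamma^{-k})^{j - Q}$ balls, using $|T| \leq Q_0 \leq Q$ and $c_1 \geq c_0$ to absorb the discrepancy.

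The principal obstacle is controlling the iterated use of the quantitative cone-splitting in the previous paragraph. Each application takes an input error $\delta$ and produces some output error $\Psi_n(\delta)$, and to build up the full $\R^{k+1}$-contradiction from $k+1$ well-separated approximate cone vertices one must ensure the compounded errors remain below $\eta$. This forces $\delta = \delta(\eta, n)$ to be prescribed in advance in a way that ties the quantitative-differentiation constant $Q_0(n, \tv, \delta)$ to $\eta$; consequently both $K$ and $Q$ depend on $\eta$ through $\gamma$. Once this error bookkeeping is completed cleanly, the ball-counting induction proceeds as above and yields both items of the lemma.
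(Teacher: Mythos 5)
Your proposal is correct in outline and follows essentially the same route as the paper's proof: the partition by the pattern of good/bad scales, the Bishop--Gromov pigeonhole bounding the number of bad scales by $K(n,\tv,\gamma)$ (hence the $j^K$ count of pieces), and the per-scale dichotomy between the trivial $c_1\gamma^{-n}$ covering on bad scales and the $c_0\gamma^{-k}$ covering on good scales obtained by iterated cone-splitting are exactly the mechanisms of Lemma \ref{l:splitting} and Corollary \ref{c:inductive}. The one technical point you defer (the ``error bookkeeping'') is resolved in the paper by declaring a scale $\gamma^i$ good via conicality of the \emph{enlarged} ball $B_{\gamma^{i-n}}(x)$ rather than of $B_{\gamma^i}(x)$ itself---each application of the cone-splitting lemma trades a factor $\gamma^{-1}$ of radius for one additional Euclidean factor, so up to $n$ iterations require a head start of $n$ scales---together with the nested tolerances $\epsilon^{[0]}<\epsilon^{[1]}<\cdots<\epsilon$ built into Corollary \ref{c:inductive}.
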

\vskip2mm

Let us provisionally assume Lemma \ref{l:reduction}. Then by volume comparision, we
have
${\rm Vol}(B_{\gamma^j}(x))\leq c_2(n)\gamma^{jn}$, which together with
$$c_0^j= (\gamma^j)^{-\frac{\eta}{2}}\, ,$$
$$j^K\leq c(n,\tv,\gamma)(\gamma^j)^{-\frac{\eta}{2}}\, ,$$
 gives
\begin{equation}
\label{e:main}
\begin{aligned}
{\rm Vol}(\cS^k_{\eta,\gamma^j}\cap B_1(\underline{x}))
&\leq j^K\cdot \left[  (c_1\gamma^{-n})^{Q} \cdot  (c_0\gamma^{-k})^{j-Q} \right]
 \cdot c_2\cdot(\gamma^j)^n\\
&\leq \underline{c}(n.\tv,\gamma) \cdot j^K\cdot c_0^{j}\cdot(\gamma^j)^{n-k}\\
&\leq \underline{c}(n,\tv,\gamma)\cdot(\gamma^j)^{n-k-\eta}\, ,
\end{aligned}
\end{equation}
where $\underline{c}(n,\tv,\gamma)=(c_1(n)/c_0(n))^Q\cdot c_2(n)\cdot\gamma^{-(n-k)Q}$.
 From the above, for all $r\leq1$, we get (\ref{mainve}) i.e.
$$
 \begin{aligned}
 \Vol(\cS^k_{\eta,r}\cap B_1(\underline{x}))
&\leq \gamma^{-1}\cdot \underline{c}(n,\tv,\gamma)\cdot r^{n-k-\eta}\\
& \leq c(n,{\tv},\eta) r^{n-k-\eta}\, .
\end{aligned}
$$
Therefore, modulo the proof of Lemma \ref{l:reduction}, we get
Theorem \ref{t:S^k_Quantitative_Est}.
\end{proof}

\begin{proof}[Proof of Lemma \ref{l:reduction}]
The sets $\cC^k_{\eta,\gamma^j}$ will be indexed as follows. Consider the
 set of \hbox{$j$-tuples} $T^j$
whose each of whose entries is either $0$, $1$. Denote the number of
entries equal to $1$ by $|T^j|$.
We are going to show the existence of $K=K(n,\tv,\gamma)\in\dZ_+$ (as above) such that
every $\cC^k_{\eta,\gamma^j}$ corresponds to some unique $T^j$
with $|T^j|\leq K$.  We denote this set by $\cC^k_{\eta,\gamma^j}(T^j)$.
Since the number of $T^j$ with $|T^j|\leq K$ is at most
\begin{equation}
\label{e:possibilities}
 {j\choose K} \leq j^{K}\,  ,
\end{equation}
the cardinality of $\{\cC^k_{\eta,\gamma^j}(T^j)\}$ is at most $ j^K$.

In order to specify  the  correspondence $T^j\to \cC^k_{\eta,j}(T^j)$, we need a
quantity
we call the {\it $t$-metric nonconicality} $\cN_t(B_r(x))\geq 0$ of a ball $B_r(x)$.
As in Section \ref{s:sr}, let $C(Z)$ denote the metric cone on $Z$ with vertex
$z^*$. Let $t\geq 1$, then we say $\cN_t(B_r(x))\leq \epsilon$ if there exists $C(Z)$
such that
\begin{equation}
\label{e:gde}
d_{GH}(B_{t r}(x),B_{t r}(z^*))\leq \epsilon r \,  .
\end{equation}
We put
\begin{equation}
\label{e:HL}
\begin{aligned}
H_{t,r,\epsilon} & =\{x\in B_1(\ux)\, |\,   \cN_t(B_r(x))\geq \epsilon\}\, , \\
L_{t,r,\epsilon} &=\{x\in B_1(\ux)\, |\,   \cN_t(B_r(x))< \epsilon\}\,  .
\end{aligned}
\end{equation}
Eventually, we will fix $\epsilon=\epsilon(n,\gamma)$,
the value in Lemma \ref{l:splitting} below.

To each $x$ we associate a $j$-tuple $T^j(x)$. For all $i\leq j$, by definition, the
$i$-th entry of
$T^j(x)$ is $1$ if $x\in H_{\gamma^{-n},\gamma^i,\epsilon}$ and 0 if $x\in
L_{\gamma^{-n},\gamma^i,\epsilon}$.
Then for each $j$-tuple $T^j$ define
$$
E_{T^j}=\{x\in B_1(\ux)\, |\, T^j(x)=T^j \} \, .
$$

Below we will show that if $E_{T^j}$ is nonempty then
\begin{equation}
\label{e:ne}
|T^j|< K(n,\tv,\epsilon) \qquad({\rm if} \,\,E_{T^j}\ne\emptyset)\, .
\end{equation}
Because the sets $\cC^k_{\eta,\gamma^{j-1}}(T^{j-1})$ are indexed by
such tuples, (\ref{e:ne}), together with  (\ref{e:possibilities}), finishes item 1.
of Lemma \ref{l:reduction}.

Let $T^{j-1}$ be obtained from $T^j$ by dropping the last entry.
 Assume that the nonempty subset
$\cC^k_{\eta,\gamma^{j-1}}(T^{j-1})$ (which is a union of balls of radius
$\gamma^{j-1}$) has been defined and satisfies item 2. of the Claim and
 $\cC^k_{\eta,\gamma^{j-1}}(T^{j-1})\supset \cS^k_{\eta,\gamma^j}\cap  E_{T^j}$.
For each ball
$B_{\gamma^{j-1}}(x)$ of $\cC^k_{\eta,\gamma^{j-1}}(T^{j-1})$, take a minimal
covering of
$B_{\gamma^{j-1}}(x)\cap \cS^k_{\eta,\gamma^j}\cap  E_{T^j}$ by balls of radius
$\gamma^j$
with centers in $B_{\gamma^{j-1}}(x)\cap \cS^k_{\eta,\gamma^j}\cap  E_{T^j}$. Define
the union of
all balls so obtained to be $\cC^k_{\eta,\gamma^j}(T^{j})$, {\it provided it is
nonempty}.

 Since $\gamma^j/\gamma^{j-1}=\gamma$, from the lower Ricci curvature bound
(\ref{con:Lower_Ricci}) and relative volume comparison, it is
clear that for each $B_{\gamma^{j-1}}(x)$ as above, the associated minimal covering
has at most
$c_1(n)\gamma^{-n}$ balls. (This is the  $c_1=c_1(n)$ appearing in (\ref{e:main}).)
However,
when $j>n$ and the $j$-entry of $T^j$ is $0$ we use instead the following lemma,
whose proof
will be given in Section \ref{s:sl}.
\begin{lemma}[Covering lemma]
\label{l:splitting}
There exists $\epsilon=\epsilon(n,\gamma)$, such that if
$\cN_{\gamma^{-n}}(B_{\gamma^{j-1}}(x))\leq \epsilon$ and $B_{\gamma^{j-1}}(x)$ is a
ball of
$\cC^k_{\eta,\gamma^{j-1}}(T^{j-1})$, then the number of balls  in the minimal
covering of
$B_{\gamma^{j-1}}(x) \cap \cS^k_{\eta,\gamma^j}\cap
L_{\gamma^{-n},\gamma^j,\epsilon}$ is
$\leq c_0\gamma^{-k}$.
\end{lemma}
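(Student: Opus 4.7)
The plan is to show that under the hypothesis of Lemma \ref{l:splitting}, the set $S := B_{\gamma^{j-1}}(x) \cap \cS^k_{\eta,\gamma^j} \cap L_{\gamma^{-n},\gamma^j,\epsilon}$ lies in a thin tubular neighborhood of an at most $k$-dimensional Euclidean subspace through $x$; once established, a direct covering count gives the bound. After rescaling the metric by $\gamma^{-(j-1)}$, the ball $B_{\gamma^{j-1}}(x)$ becomes $B_1(x)$, and the hypothesis $\cN_{\gamma^{-n}}(B_{\gamma^{j-1}}(x))\leq\epsilon$ yields an $\epsilon$-Gromov--Hausdorff approximation of $B_{\gamma^{-n}}(x)$ by a cone ball $B_{\gamma^{-n}}(z^*)\subset C(Z)$ with the vertex identified with $x$. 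Similarly, for each $y\in S$, the membership $y\in L_{\gamma^{-n},\gamma^j,\epsilon}$ provides a $\gamma\epsilon$-Gromov--Hausdorff approximation of $B_{\gamma^{1-n}}(y)$ by a cone ball with vertex at $y$; both approximations restrict to $B_1(x)$.

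The core of the argument is to iterate the quantitative cone-splitting principle (Lemma \ref{l:qs}) at the fixed center $x$. I build a sequence of approximate isometric splittings $\dR^\ell\times C_\ell$ of $B_1(x)$ for $\ell=0,1,2,\ldots$, each with $x$ on the axis $\dR^\ell\times\{c_\ell^*\}$. Given the splitting at step $\ell$, I search for a point $y_{\ell+1}\in S$ whose image under the approximation lies at distance at least $\tau:=\gamma/10$ from this axis. If no such point exists, I stop. Otherwise, Lemma \ref{l:qs} applied with $p=x$, $q=y_{\ell+1}$ and $r=1$ upgrades the splitting to $\dR^{\ell+1}\times C_{\ell+1}$, and the cone-splitting construction places both $x$ and $y_{\ell+1}$ on the new axis, so $B_1(y_{\ell+1})$ is itself close to a cone ball $B_1((\underline 0,c_{\ell+1}^*))\subset \dR^{\ell+1}\times C_{\ell+1}$ up to a translation in the Euclidean factor.

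Fix $\epsilon=\epsilon(n,\gamma)$ small enough that the Gromov--Hausdorff error accumulated over at most $n$ iterations of Lemma \ref{l:qs} stays below both $\tau$ and $\eta$. With this choice, the iteration must halt at some $\ell\leq k$: if $\ell=k+1$ were reached, then $y_{k+1}\in S\subset \cS^k_{\eta,\gamma^j}$ would be an axis point of the final splitting, so $B_1(y_{k+1})$ would be within Gromov--Hausdorff distance less than $\eta$ of a cone ball in $\dR^{k+1}\times C_{k+1}$, contradicting the defining condition of $\cS^k_{\eta,\gamma^j}$ at scale $s=\gamma^{j-1}$ (which rescales to $s=1$). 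Therefore the procedure terminates with $S$ contained in the $\tau$-tube about an $\ell$-dimensional subspace of $B_1(x)$ with $\ell\leq k$. Unrescaling, this is the $(\gamma^j/10)$-tube about a $k$-dimensional set of diameter $O(\gamma^{j-1})$ inside $B_{\gamma^{j-1}}(x)$, which admits a cover by at most $c_0(n)\gamma^{-k}$ balls of radius $\gamma^j$.

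The main obstacle will be controlling the propagation of errors through the iteration: each application of Lemma \ref{l:qs} takes input error $\delta$ to output error that degrades by constants depending on $n$ and $\ell$, so one must take $\epsilon$ small enough to survive up to $n$ such degradations and still land below $\min(\eta,\gamma/10)$. Since $\gamma=\gamma(\eta)$, the resulting threshold is $\epsilon=\epsilon(n,\gamma)$, consistent with the statement. A secondary point is that the cone structures at $x$ and at each $y$ come from balls of radii $\gamma^{-n}$ and $\gamma^{1-n}$ respectively, but both exceed $1$, so the approximations can be restricted to a common unit ball before feeding them into Lemma \ref{l:qs}.
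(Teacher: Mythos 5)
Your overall strategy coincides with the paper's: iterate the quantitative cone-splitting lemma (Lemma \ref{l:qs}) to trap $B_{\gamma^{j-1}}(x)\cap\cS^k_{\eta,\gamma^j}\cap L_{\gamma^{-n},\gamma^j,\epsilon}$ in a thin tube around an approximate Euclidean factor $\dR^\ell$ with $\ell\le k$, and then count balls; the paper packages the iteration as Corollary \ref{c:inductive}, and your final covering count is the same. However, two steps of your iteration need repair. First, you cannot run the iteration at the fixed scale $r=1$: each application of Lemma \ref{l:qs} consumes the hypothesis of an $\epsilon r$-splitting on the \emph{larger} ball $B_{\gamma^{-1}r}(x)$ and returns a splitting only on $B_r(x)$, so a second application at the same $r$ has no input. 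The iteration must descend through the scales $\gamma^{-n}r,\gamma^{-(n-1)}r,\dots,r$ --- this is precisely why the definition of $L_{\gamma^{-n},\cdot,\cdot}$ involves $\gamma^{-n}$ rather than $\gamma^{-1}$, and it is how Corollary \ref{c:inductive} is organized (apply Lemma \ref{l:qs} with $r\to\gamma^{-(n-\ell-1)}r$ and accuracies $\epsilon^{[\ell]}\to\epsilon^{[\ell+1]}$). Your remark about ``restricting to a common unit ball'' does not recover this, since after restriction you no longer control the new splitting on a ball of radius $\gamma^{-1}$ about $x$.

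Second, your termination at $\ell=k$ invokes the claim that $y_{k+1}$ lies on the axis of the upgraded splitting $\dR^{k+1}\times C_{k+1}$, so that $B_{\gamma^{j-1}}(y_{k+1})$ is close to a vertex ball. That is true for exact cones, but it is not part of the stated conclusion of Lemma \ref{l:qs}, which only locates $B_r(x)$ near a vertex ball and says nothing about where $q$ is sent; you would have to strengthen the cone-splitting lemma to use it. The paper avoids this entirely: the center $x$ of a ball of $\cC^k_{\eta,\gamma^{j-1}}(T^{j-1})$ lies by construction in $\cS^k_{\eta,\gamma^{j-1}}$, so once the accumulated error is below $\eta\gamma^{j-1}$ no $\dR^{k+1}$-splitting of $B_{\gamma^{j-1}}(x)$ can be produced; equivalently one takes the largest $\ell$ for which a splitting of the prescribed accuracy exists and observes it is at most $k$. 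With these two adjustments your argument becomes the paper's proof.
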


\begin{remark}
 In order to apply Lemma \ref{l:splitting}, we need $j>n$.
This explains the appearance of the quantity,  $Q=K+n$  in the
statement of Lemma \ref{l:reduction}.
\end{remark}

\begin{remark}
Lemma \ref{l:splitting} can be viewed as the quantitative analog of the density
argument in the proof that $\dim \cS^k\leq k$.
 Its  proof is a direct consequence of Corollary \ref{c:inductive} of
Lemma \ref{l:qs} (the cone-splitting lemma).
Corollary \ref{c:inductive} provides the quantitative analog of the application of the
splitting theorem  in the proof that $\dim \cS^k\leq k$; see
Section \ref{s:sl}.
 \end{remark}

 Assuming Lemma \ref{l:splitting},
an obvious induction argument yields the bound on the number of
balls of $\cC^k_{\eta,\gamma^j}$ appearing in item 2.
of Lemma \ref{l:reduction}. The factor with exponent $Q$ in item 2. arises
from the (at most $Q$) scales on which the hypothesis of Lemma \ref{l:splitting}
is not satisfied and we are forced to use the standard covering
by at most $c_1\gamma^{-n}$ balls. The factor with exponent $j-Q$
arises from the remaining scales on which we can cover by at most
$c_0\gamma^k$ balls as guaranteed by Lemma \ref{l:splitting}.

We close this section by verifying (\ref{e:ne})
which, as previously noted, suffices to verify item 1. of Lemma \ref{l:reduction}.

Let the notation be as in (\ref{con:Noncollapsed}). For $r>0$,  we consider the
volume ratio
\begin{equation}
\label{d:V_ratio}
\cV_r(x)= \frac{\Vol(B_r(x))}{\Vol_{-1}(r)}\, \downarrow\, .
\end{equation}
The fact that $\cV_r(x)$ is a nonincreasing function of $r$ is just the
Bishop-Gromov inequality.

 For $t>s$, Define the  {\it $(t,s)$-volume energy} $\cW_{t,s}(x)$ by
$$
\cW_{t,s}(x)=
\log\frac{\cV_s(x)}{\cV_{t}(x)}\geq 0\, .
$$
Note that if $s_1\geq t_2$,
 then
\begin{equation}
\label{e:additivity}
\cW_{t_1,s_2}(x)\geq \cW_{t_1,s_1}(x)+\cW_{t_2,s_2}(x)\, ,
\end{equation}
with equality if $t_2=s_1$.
Let  $(s_i,t_i)$ denote a possibly
infinite sequence of intervals with $s_i\geq t_{i+1}$ and $t_1=1$.

Since $\lim_{r\to 0}\log \cV_r(x)=0$ and the $\tv$-noncollapsing
assumption (\ref{con:Noncollapsed}) holds,
 by using (\ref{e:additivity}) together with induction and passing to the limit, we get
\begin{equation}
\label{e:eb}
\log\frac{1}{\tv}\geq\log\frac{1}{\cV_{1}(x)}
\geq \cW_{t_1,s_1} + \cW_{t_2,s_2}+\cdots \, .
\end{equation}
where the terms on the right-hand side are all nonnegative.

 Fix $\delta>0$ and let $N$ denote the number of $i$ such that
$$
\cW_{\gamma^{i-n},\gamma^i}>\delta\, .
$$
Then
\begin{equation}
\label{e:Nb}
N\leq (n+1)\cdot\delta^{-1}\cdot \log\frac{1}{\tv}\, .
\end{equation}
Otherwise, there would be at least $\delta^{-1}\cdot \log\frac{1}{\tv}$ {\it
disjoint} closed  intervals of
 the form $[\gamma^i,\gamma^{i-n}]$ with
$\cW_{\gamma^{i-n}, \gamma^{i}}>\delta$, contradicting (\ref{e:eb}).

Let $\epsilon=\epsilon(n,\gamma)$ be as in Lemma \ref{l:splitting}. The ``almost
volume cone implies almost
metric cone'' theorem of \cite{cheegercoldingalmostrigidity} implies the existence
of $\delta=\delta (\epsilon)$
such that if $\cW_{\gamma^{i-n}, \gamma^{i}}\leq \delta$ then
$\cN_{\gamma^{-n}}(B_{\gamma^i}(x))\leq \epsilon\gamma^i$, i.e.
$x\in L_{\gamma^{-n},\gamma^i,\epsilon}$.
This gives (\ref{e:ne}), which completes the proof of Lemma \ref{l:splitting},
modulo that  of Lemma \ref{l:reduction}.
 \end{proof}

\begin{remark}
Clearly, (\ref{e:ne}) is the quantitative version of the fact that for noncollapsed
limit spaces tangent cones are metric
cones; compare the proof of the inequality, $\dim\cS^k$, which was recalled at the
begining of this section. As previously indicated,
relation (\ref{e:ne}) and its proof
provide an instance of {\it quantitative differentiation} in the sense of Section 14 of
 \cite{CKN}.
\end{remark}

\section{Proof of the covering lemma via the cone-splitting lemma}
\label{s:sl}


Assume that the cone $\dR^\ell\times C(\overline{Z})$ is a Gromov-Hausdorff limit space
with the lower bound on Ricci curvature tending to zero.  Suppose in addition that
there exists
 $y'\not\in \dR^\ell\times \{\overline{z}^*\}$, a
cone $C(\hat{Z})$ and an isometry $I:\dR^\ell\times C(\overline{Z})\to C(\hat{Z})$
 with $I(y')=\hat{z}^*$.  Then $\dR^\ell\times C(\overline{Z})$
 is isometric to a cone $\dR^{\ell+1}\times C(\tilde Z)$.
 This follows because if both
 $\overline{z}^*$ and $y'$ are vertices of cone structures then it is virtually
immediate that
there must be a line which passes through these points. Therefore, the result
follows from
 the splitting theorem; compare the discussion of cone-splitting in Section
\ref{s:outline}.

We continue to denote by $T_s(\,\cdot\,)$ the $s$-tubular neighborhood. Recall that
 $L_{t,r,\epsilon}$ is defined in (\ref{e:HL}).
The above, together with an obvious compactness argument (and rescaling) yields the
following.

\begin{lemma}[Cone-splitting lemma]
\label{l:qs}
For all $\gamma,\tau,\psi >0$ there exists
$0<\epsilon=\epsilon(n,\gamma,\tau,\psi)<\psi$,
$0<\theta= \theta(n,\gamma,\tau,\psi)$, such that
the following holds. Let $r\leq \theta$ and assume that for some cone
$\dR^\ell \times C(Z)$ there is \hbox{$\epsilon r$-Gromov-Hausdorff} equivalence
$$
F:  B_{\gamma^{-1}r}((\underline{0},z^*))\to B_{\gamma^{-1}r}(x)\, .
$$
 If there exists
$$
x'\in B_r(x)\cap L_{\gamma^{-1},r,\epsilon}\, ,
$$
 with
$$
x'\not\in T_{\tau r}(F(\dR^\ell\times \{z^*\}))\cap B_r(x)\, ,
$$
then for some cone $\dR^{\ell+1}\times C(\tilde Z)$,
$$
d_{GH}(B_r(x),B_r((\underline{0},\tilde z^*)))<\psi r\, .
$$
\end{lemma}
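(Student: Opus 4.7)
The plan is to prove Lemma \ref{l:qs} by a contradiction and compactness argument, reducing to the non-quantitative cone-splitting principle recalled at the start of Section \ref{s:sl}. Fix $\gamma, \tau, \psi > 0$. Suppose the conclusion fails: then there exist sequences $\epsilon_i \to 0$ and $\theta_i \to 0$, smooth manifolds $M_i^n$ (by Remark \ref{r:sc}) satisfying (\ref{con:Lower_Ricci}), (\ref{con:Noncollapsed}), points $x_i \in M_i^n$, radii $r_i \leq \theta_i$ (in particular $r_i \to 0$), integers $\ell_i$ (which may be assumed constant equal to some $\ell \leq n-1$ on a subsequence, since $\ell$ is bounded), cones $\dR^{\ell}\times C(Z_i)$, $\epsilon_i r_i$-GH equivalences $F_i: B_{\gamma^{-1}r_i}((\underline{0},z_i^*))\to B_{\gamma^{-1}r_i}(x_i)$, and points $x_i'\in B_{r_i}(x_i)\cap L_{\gamma^{-1},r_i,\epsilon_i}$ with $x_i'\notin T_{\tau r_i}(F_i(\dR^\ell\times\{z_i^*\}))$, such that no cone $\dR^{\ell+1}\times C(\tilde Z)$ gives $d_{GH}(B_{r_i}(x_i), B_{r_i}((\underline{0}, \tilde z^*))) < \psi r_i$.

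Next I would rescale each $M_i^n$ by the factor $r_i^{-1}$. The rescaled Ricci curvature bound becomes $\Ric \geq -(n-1)r_i^2 \to 0$, while $\tv$-noncollapsing on the unit scale persists. By Gromov precompactness and the noncollapsing assumption, after passing to a subsequence $(M_i^n, x_i)$ converges in the pointed Gromov-Hausdorff sense to a limit space $(Y_\infty, x_\infty)$, and $Y_\infty$ has nonnegative Ricci curvature in the generalized sense of \cite{cheegercoldingalmostrigidity}. Since $\epsilon_i\to 0$, the rescaled $F_i$ converge to an actual isometry $F_\infty: B_{\gamma^{-1}}((\underline{0},z_\infty^*))\to B_{\gamma^{-1}}(x_\infty)$ in some limit cone $\dR^\ell\times C(Z_\infty)$. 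The rescaled image of $x_i'$ converges to some $x_\infty'\in \overline{B_1(x_\infty)}$, and $x_\infty'\in L_{\gamma^{-1},1,0}$ (in the rescaled metric), so $B_{\gamma^{-1}}(x_\infty')$ is isometric to $B_{\gamma^{-1}}(\hat z_\infty^*)$ for some cone $C(\hat Z_\infty)$ with vertex $\hat z_\infty^*$. Finally, because $y\notin T_\tau(S)$ is a closed condition on $(y,S)$ under GH convergence, one has $x_\infty'\notin T_\tau(F_\infty(\dR^\ell\times\{z_\infty^*\}))$, and in particular $x_\infty'$ does not lie on the $\dR^\ell$-axis through $x_\infty$.

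At this stage $Y_\infty$ carries two distinct metric cone structures, with vertices at $x_\infty$ and at $x_\infty'$, the second of which is positively separated (by $\tau$) from the $\dR^\ell$-factor of the first. The non-quantitative cone-splitting principle, which as recalled in Section \ref{s:sl} follows from the Cheeger-Colding splitting theorem applied to the line through the two vertices, then forces $B_1(x_\infty)$ to be isometric to $B_1((\underline{0},\tilde z^*))$ inside some cone $\dR^{\ell+1}\times C(\tilde Z)$. This contradicts the failure assumption that no such cone comes within GH distance $\psi$ of $B_1(x_\infty)$, and the lemma follows; the condition $\epsilon<\psi$ is then imposed by shrinking $\epsilon$ a posteriori if necessary.

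The main obstacle, and the reason the hypothesis involves balls of radius $\gamma^{-1}r$ rather than only $r$, is to ensure that the two emerging cone structures are defined on \emph{overlapping} regions large enough relative to the separation $\tau r$ for the splitting theorem to produce a full isometric line (hence an extra Euclidean factor), rather than merely a short geodesic segment. Keeping this overlap controlled is what ties $\epsilon$ to $\tau$ and $\gamma$ in the final dependence $\epsilon(n,\gamma,\tau,\psi)$, and is also what forces the auxiliary hypothesis $x'\in L_{\gamma^{-1},r,\epsilon}$ (with the larger scale $\gamma^{-1}$) rather than a cruder conicality condition at scale $r$.
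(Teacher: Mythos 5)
Your argument is, in substance, the paper's own proof: the paper disposes of Lemma \ref{l:qs} in one line, asserting that the nonquantitative cone-splitting principle recalled at the top of Section \ref{s:sl} ``together with an obvious compactness argument (and rescaling) yields the following'', and your contradiction/rescaling/limit scheme is precisely that compactness argument written out, with the same ultimate appeal to the splitting theorem of \cite{cheegercoldingalmostrigidity} and the same use of $\theta_i\to 0$ to make the rescaled Ricci lower bound tend to zero.

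One step deserves more care than either you or the paper gives it: the claim that in the limit the cone-splitting principle ``forces $B_1(x_\infty)$ to be isometric to $B_1((\underline{0},\tilde z^*))$''. The principle as stated requires two \emph{global} cone structures, whereas your limit produces exact cone structures only on balls of the fixed radius $\gamma^{-1}$ (as $\epsilon_i\to 0$ it is the GH error that shrinks, not the radius that grows). Hence there is no line in $Y_\infty$: concatenating the radial geodesic of the $x'_\infty$-cone through $x_\infty$ with the radial geodesic of the $x_\infty$-cone through $x'_\infty$ gives only a minimizing segment of length roughly $2\gamma^{-1}$, so the splitting theorem cannot be applied verbatim, and the almost splitting theorem applied to that segment splits $B_1(x_\infty)$ only up to an error depending on $\gamma$ --- which does not by itself contradict the assumed failure of $\psi r$-closeness for an arbitrary prescribed $\psi$ at fixed $\gamma$. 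To close this you must either establish an exact local rigidity statement (two overlapping exact metric-cone balls, with the second vertex a definite distance off the $\dR^\ell$-slice of the first, force an exact additional splitting of a smaller concentric ball), or set up the contradiction so that the radius of the hypothesis balls tends to infinity along the sequence, making the limiting cone structures global so that the splitting theorem applies exactly. Since the paper hides this point inside the word ``obvious'', your write-up is at parity with the paper's, but at the level of detail you give, this final step is where the argument is incomplete.
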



\begin{corollary}\label{c:inductive}
For all $\gamma, \tau,\psi >0$ there exists $0<\delta(n,\gamma,\tau,\psi)$
 and $0<\theta(n,\gamma,\tau,\psi)$ such that the following holds.  Let $r\leq
\theta$ and
 $x\in L_{\gamma^{-n},\delta,r}$. Then there exists a cone $\dR^{\ell} \times
C(\tilde{Z})$
with a \hbox{$\psi r$-Gromov-Hausdorff} equivalence
$$
F:  B_{r}((\underline{0},\tilde z^*))\to B_{r}(x)\, ,
$$
such that
$$
L_{\gamma^{-n},\delta,r}\cap B_r(x)\subseteq T_{\tau r}(F(\dR^\ell\times \{\tilde
z^*\}))\, .
$$
\end{corollary}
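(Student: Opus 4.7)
The plan is to apply the cone-splitting lemma (Lemma \ref{l:qs}) iteratively at a chain of scales $r_i := \gamma^{-(n-i)} r$, $i = 0,1,\ldots,n$, so that $r_0 = \gamma^{-n}r$ and $r_n = r$. At each step one maintains a cone $\dR^{\ell_i} \times C(Z_i)$ together with a Gromov--Hausdorff approximation $F_i : B_{r_i}((\underline{0}, \tilde z^*_i)) \to B_{r_i}(x)$ of error at most $\psi_i r_i$, and one either stops with the desired conclusion (outputting $\ell = \ell_i$ and $F = F_i|_{B_r(\cdot)}$) or locates an obstruction point $x'$ and invokes Lemma \ref{l:qs} to obtain a new cone with $\ell_{i+1} = \ell_i + 1$. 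Since $\ell_i \leq n$ for a noncollapsed $n$-dimensional limit (and the case $\ell_n = n$ is trivially terminal, as the Euclidean factor then fills $B_r(x)$ up to the GH defect), the iteration terminates within at most $n$ rounds.

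First I would fix parameters by backward induction. Set $\psi_n = \psi$ and $\tau_i := \tau \gamma^{n-i-1}$, so that $\tau_i r_{i+1} = \tau r$. For $i = n-1, n-2, \ldots, 0$ define $\epsilon_{i+1} := \epsilon(n, \gamma, \tau_i, \psi_{i+1})$ and $\theta_{i+1} := \theta(n, \gamma, \tau_i, \psi_{i+1})$ from Lemma \ref{l:qs}, and set $\psi_i := \min(\psi_{i+1}, \gamma \epsilon_{i+1})$. Finally take $\delta := \min_i \epsilon_{i+1}$ (after absorbing a mild loss coming from the concentric-ball restriction mentioned below) and $\theta := \gamma^n \min_i \theta_{i+1}$. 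These choices guarantee that $r \leq \theta$ implies each $r_{i+1} \leq \theta_{i+1}$, that $\psi_i r_i \leq \epsilon_{i+1} r_{i+1}$ (so the output of step $i$ feeds into step $i+1$), and that $\delta r \leq \epsilon_{i+1} r_{i+1}$. The iteration begins at $i = 0$, where the hypothesis $x \in L_{\gamma^{-n},\delta,r}$ directly supplies a cone $C(Z_0)$ and an approximation $F_0$ with $\ell_0 = 0$. At the inductive step, test whether
$$
L_{\gamma^{-n}, \delta, r} \cap B_r(x) \;\subseteq\; T_{\tau r}\bigl(F_i(\dR^{\ell_i} \times \{\tilde z^*_i\})\bigr).
$$
If yes, stop; otherwise pick $x'$ in the left-hand side but outside the tubular neighborhood. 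Then $x' \in B_{r_{i+1}}(x)$ (since $r \leq r_{i+1}$), $x' \in L_{\gamma^{-1}, r_{i+1}, \epsilon_{i+1}}$ (the cone witnessing $x' \in L_{\gamma^{-n},\delta,r}$ restricts to $B_{\gamma^{-1} r_{i+1}}(x') = B_{r_i}(x')$ by scale invariance of cones, with GH defect at most $\epsilon_{i+1} r_{i+1}$), and $x' \notin T_{\tau_i r_{i+1}}(F_i(\cdot))$ (since $\tau_i r_{i+1} = \tau r$). Invoking Lemma \ref{l:qs} with parameter triple $(\gamma, \tau_i, \psi_{i+1})$ at scale $r_{i+1}$ then yields $\dR^{\ell_i+1} \times C(Z_{i+1})$ and the required $\psi_{i+1} r_{i+1}$-GH approximation $F_{i+1}$.

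The main issue is the parameter bookkeeping above: ensuring that the output accuracy at one scale matches the input accuracy demanded at the next, that the scale-$r$ tubular exclusion appearing in the corollary's conclusion matches the scale-$r_{i+1}$ exclusion required by the lemma (which is what forces the rescaling $\tau_i = \tau\gamma^{n-i-1}$), and that $x' \in L_{\gamma^{-n},\delta,r}$ pushes forward to $x' \in L_{\gamma^{-1}, r_{i+1}, \epsilon_{i+1}}$ at each intermediate scale. This last implication is the only quasi-technical step; it follows from a direct rescaling and triangle-inequality argument exploiting scale invariance of cones together with the noncollapsing assumption, and it is the only place where the conversion from the single fixed scale $\gamma^{-n}r$ in the hypothesis to the varying scales $\gamma^{-1} r_{i+1}$ in the lemma has to be handled with care.
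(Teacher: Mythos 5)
Your proposal is correct and is essentially the paper's own argument: the paper likewise fixes accuracies by composing the $\epsilon(\cdot)$ of Lemma \ref{l:qs} backwards (its $\epsilon^{[0]}<\epsilon^{[1]}<\cdots$, with $\delta=\epsilon^{[0]}$) and applies the cone-splitting lemma along the chain of scales $\gamma^{-(n-\ell)}r$, merely phrasing the termination as ``take the largest $\ell$ admitting a sufficiently good $\dR^\ell$-splitting'' rather than running your explicit upward iteration. The parameter bookkeeping you carry out (including the concentric-ball restriction when converting $x'\in L_{\gamma^{-n},r,\delta}$ into the hypothesis $x'\in L_{\gamma^{-1},r_{i+1},\epsilon_{i+1}}$) is exactly the content the paper leaves implicit, so no changes are needed.
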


\begin{proof}
 For $\epsilon(n,\gamma,\tau,\psi)$ as in Lemma \ref{l:qs}, inductively define
$\epsilon^{[n-i]}=\epsilon\circ\epsilon\circ\cdots\circ\epsilon(n,\gamma^{-n},\tau,\psi)$
\hbox{($i$ factors in the composition)}. Then
$\epsilon^{[0]}<\epsilon^{[1]}<\cdots<\epsilon$.
Put $\delta=\epsilon^{[0]}$.
Since by assumption, $x\in L_{\gamma^{-n},\delta,r}$,  there exists a largest $\ell$
such
that  for some cone $\dR^\ell\times C(\tilde{Z})$,
there is an \hbox{$\epsilon^{[n-\ell]} r$-Gromov-Hausdorff} equivalence
$F: B_{\gamma^{-(n-\ell)}r}((\underline{0},z^*) \to B_{\gamma^{-(n-\ell)}r}(x)$.
To see that the conclusion holds for this value of $\ell$, apply
Lemma \ref{l:qs} with the replacements: $r\to \gamma^{-(n-\ell-1)}r$,
$\tau=\gamma^{-(n-\ell-1)}\tau$,
$\epsilon\to \epsilon^{[\ell]}$, $\psi\to\epsilon^{[\ell+1]}$.
\end{proof}

\begin{proof}[Proof of Lemma \ref{l:splitting}]
Let $B_{\gamma^{j-1}}(x)$ be as in Lemma \ref{l:splitting}.  Since by assumption,
$x\in \cS^k_{\eta,\gamma^i}\cap L_{\gamma^{-n},\gamma^j,\epsilon}$
no  cone as in (\ref{e:gde}) with $t=\gamma^{-n}\cdot\gamma^{j-1}$ can split off a
factor $\R^{k+1}$
isometrically.
By applying Corollary \ref{c:inductive} with $r=\gamma^{j-1}$,
 $\psi=\frac{1}{10}\gamma$ it follows that for some $\ell\leq k$ and $F$
as in the corollary, we have
$$
B_{\gamma^{j-1}}(x)\cap \cS^k_{\eta,\gamma^i}\cap L_{\gamma^{-n},\gamma^j,\epsilon}
\subset F(T_{\frac{1}{10}\gamma^j}(\dR^\ell\times \{\tilde z^*\}))
\cap B_{\gamma^{j-1}}(x)\, .
$$
Clearly, this suffices to complete the proof.
\end{proof}

\section{Curvature estimates absent a priori integral bounds}
\label{s:Effective_Curvature_Estimates1}

In this short section we prove Theorem  \ref{t:Effective_Curvature_Estimate1}.
Recall that the assumptions are that
$(M^n,g)$ is an Einstein manifold which satisfies
the $\tv$-noncollapsing  condition (\ref{con:Noncollapsed}) and
 the bound (\ref{con:Bounded_Ricci}) on the Einstein constant.
Item 1. pertains to the real case and item 2. to the K\"ahler case.
  The curvature estimates of Theorem \ref{t:Effective_Curvature_Estimate1} follow by
combining the
geometric $\epsilon$-regularity theorems of \cite{cheegercoldingonthestruct1}
 and \cite{MR1978491} with Theorem \ref{t:S^k_Quantitative_Est}.
The proofs of these theorems rely on an $\epsilon$-regularity
theorem of  Anderson; see  \cite{MR1074481}.
We now recall the statements.

Let $(\underline{0},z^*)$ denote the vertex of the cone
$\dR^\ell\times C(Z)$.
Assume that $(M^n,g)$ is an Einstein manifold which satisfies
the $\tv$-noncollapsing  condition (\ref{con:Noncollapsed}) and
 the bound (\ref{con:Bounded_Ricci}) on the Einstein constant.
In our language, the $\epsilon$-regularity theorem of
\cite{cheegercoldingonthestruct1}, which does not
assume the K\"ahler condition, asserts that
there exists $\epsilon_0(n,\tv)>0$
 such that if
 $$
d_{GH}(B_r(x),B_r((\underline{0},z^*)))\leq \epsilon_0 r\, , \qquad (\ell>n-2)\, ,
$$
then on $B_{\frac{1}{2}r}(x)$ there exists a harmonic coordinate system
in which the $g_{ij}$ and $g^{ij}$ have definite $C^k$ bounds, for all $k$.  In
particular, the $C^2$-harmonic radius satisfies
$r_{har}(x)\geq c(n) r$; $\clubsuit$ see Definition \ref{hrdef}.

By \cite{MR1978491},  in
 the K\"ahler-Einstein case,  the same
conclusion holds if $\ell>n-4$.
(Conjecturally, the K\"ahler condition can be dropped.)

\begin{proof}[Proof of Theorem \ref{t:Effective_Curvature_Estimate1}]
Since the arguments are mutadis mutandis the same for items 1. and 2.
of Theorem \ref{t:Effective_Curvature_Estimate1}, we will just prove
item 1.
In this case, by the $\epsilon$-regularity theorem, for all $\eta\leq \epsilon_0$,
$$
\cB_r\cap B_{\frac{1}{2}}(\ux)\subseteq T_{r}(\cS^{n-2}_{\eta,Cr})\cap
B_{\frac{1}{2}}(\ux)\, .
$$
Thus, by Theorem
 \ref{t:S^k_Quantitative_Est}, we have
$$
\begin{aligned}
\Vol(\cB_r\cap B_{\frac{1}{2}}(x))& \leq \Vol(T_{Cr}(\cS^{n-2}_{\eta,Cr})
                                                         \cap B_{\frac{1}{2}}(x))\\
                                                     &\leq C(n,\tv,\eta)r^{2-\eta}\, ,
\end{aligned}
$$
 which
 completes the proof.
\end{proof}

\section{Curvature estimates given a priori integral bounds}
\label{s:Effective_Curvature_Estimates2}

In this section we prove Theorem
\ref{t:Effective_Curvature_Estimate2}.

The proof  uses the following corollary of Theorem \ref{t:S^k_Quantitative_Est}.
 For $r_1<r_2$, put
\begin{equation}
\label{e:genS}
\cS^k_{\eta,r_1,r_2}:=\{x  \, |\,
d_{GH}(B_s(x),B_s((\underline{0},z^*)))
\geq\eta s, \,\,{\rm for\,\, all}\,\, \dR^{k+1}\times C(Z)\, \, {\rm and\,\,
all}\,\, r_1\leq s\leq  r_2\}\, .
\end{equation}
\begin{corollary}
\label{rescaled}
\begin{equation}
\label{e:rescaledS}
\begin{aligned}
\Vol(\cS^k_{\eta,r_1,r_2}\cap B_{r_1}(x))&\leq
c(n,\tv,\eta)(r_2^{-1}r_1)^{-(k+\eta)}\cdot r_1^{n}\\
                                                                               &=c(n,\tv,\eta)r_1^{n-k-\eta}\cdot
r_2^{(k+\eta)}
\end{aligned}
\end{equation}
\end{corollary}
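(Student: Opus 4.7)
The plan is to derive this corollary from Theorem \ref{t:S^k_Quantitative_Est} by rescaling the metric so that the upper scale $r_2$ becomes unity. Set $\tilde g = r_2^{-2} g$, so that distances are multiplied by $r_2^{-1}$, volumes by $r_2^{-n}$, and the scale window $[r_1, r_2]$ becomes $[r_1/r_2, 1]$. Because the Gromov-Hausdorff conditions defining $\cS^k_{\eta, r_1, r_2}$ involve tolerances proportional to $s$, they are invariant under this rescaling, so the set $\cS^k_{\eta, r_1, r_2}$ computed with respect to $g$ coincides, as a subset of $M^n$, with the set $\cS^k_{\eta, r_1/r_2}$ computed from $\tilde g$ via Definition \ref{d:strat}.

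Next I would verify that $(M^n, \tilde g)$ still satisfies the hypotheses of Theorem \ref{t:S^k_Quantitative_Est} with constants depending only on $n, \tv$. Since $r_2 \leq 1$, the rescaled Ricci bound $\Ric_{\tilde g} = \Ric_g \geq -(n-1) r_2^2 \tilde g \geq -(n-1) \tilde g$ holds. For the noncollapsing condition, Bishop-Gromov monotonicity applied in $g$ at scale $r_2$ yields $\Vol_g(B_{r_2}(x))/\Vol_{-1}(r_2) \geq \tv$, and combining this with the elementary bound $\Vol_{-1}(r_2) \geq c(n) r_2^n$ valid for $r_2 \leq 1$ gives $\Vol_{\tilde g}(B_1(x)) \geq \tilde\tv(n, \tv) > 0$. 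Applying Theorem \ref{t:S^k_Quantitative_Est} to $(M^n, \tilde g)$ at the radius parameter $r_1/r_2 < 1$ now produces
$$
\Vol_{\tilde g}\bigl(\cS^k_{\eta, r_1/r_2} \cap B_{1/2}(x)\bigr) \leq c(n, \tilde\tv, \eta) (r_1/r_2)^{n-k-\eta}.
$$
Rescaling back to $g$ multiplies volumes by $r_2^n$ and turns the ball $B_{1/2}(x)$ in $\tilde g$ into $B_{r_2/2}(x)$ in $g$, giving the desired inequality on $B_{r_2/2}(x)$.

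Finally, $B_{r_1}(x) \subseteq B_{r_2/2}(x)$ whenever $r_1 \leq r_2/2$, so the corollary follows in this range. In the complementary range $r_2/2 < r_1 < r_2$, one simply uses the Bishop-Gromov volume bound $\Vol_g(B_{r_1}(x)) \leq c(n) r_1^n$ together with the elementary inequality $r_1^{n-k-\eta} r_2^{k+\eta} \geq 2^{-(k+\eta)} r_1^n$ to absorb everything into the constant. I do not anticipate any real obstacle: the whole argument amounts to a careful rescaling of Theorem \ref{t:S^k_Quantitative_Est}, with the only subtlety being the bookkeeping of the noncollapsing constant, since the hyperbolic comparison space has fixed curvature $-1$ in both metrics rather than scaling with the manifold.
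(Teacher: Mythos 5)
Your argument is correct and is essentially the paper's own proof: the paper likewise rescales the metric so that the scale $r_2$ becomes of unit size (there by the factor $\tfrac{1}{2}r_2^{-1}$, so that $B_{r_2}(x)$ becomes $B_{1/2}(x)$), identifies $\cS^k_{\eta,r_1,r_2}$ with the effective stratum $\hat\cS^k_{\eta,\cdot}$ of the rescaled metric using the scale invariance of the defining Gromov--Hausdorff conditions, applies Theorem \ref{t:S^k_Quantitative_Est}, and interprets the conclusion back in the original metric. Your additional bookkeeping (the improved Ricci lower bound for $r_2\leq 1$, the noncollapsing of the rescaled metric via Bishop--Gromov, and the easy case $r_2/2<r_1<r_2$) is left implicit in the paper but is exactly the right verification.
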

\begin{proof}
 Let $\hat B_{r_2}(x)$ denote  the ball of radius $\frac{1}{2}$ obtained by
rescaling the
metric on $B_{r_2}(x)$ by a factor $\frac{1}{2}\cdot r^{-1}_2$ and let $
\hat\cS^k_{\eta,r}$
denote $\cS^k_{\eta,r}$ for the rescaled metric.
Then
$$
\cS^k_{\eta,r_1,r_2}\cap B_{r_2}(x)= \hat\cS^k_{\eta,\frac{1}{2}r_2^{-1}r_1}
  \cap\hat B_{\frac{1}{2}r_2^{-1}r_1}(x)\,.
$$
If we apply Theorem \ref{t:S^k_Quantitative_Est} in the rescaled situation and
interpret the
 conclusion for the original metric, we get (\ref{e:rescaledS}).
\end{proof}

Recall that in addition to (\ref{con:Noncollapsed}), (\ref{con:Bounded_Ricci}),
and the assumption that $(M^n,g)$ is Einstein, we assume
the $L_p$ curvature bound (\ref{con:L^p_Curvature_Bound}).

The proof of Theorem \ref{t:Effective_Curvature_Estimate1} also uses
 the $\epsilon$-regularity theorems of \cite{MR1937830} ($p=2$),
\cite{MR1978491} ($p\geq 2$) and  Theorem \ref{t:S^k_Quantitative_Est} for the case
$k=n-2p-1$.
We now recall the statement from \cite{MR1978491}.

As usual, $(\underline{0},z^*)$ denotes the vertex of the cone
$\dR^\ell\times C(Z)$.  Assume that $(M^n,g)$ is a K\"ahler-Einstein manifold which
satisfies
the $\tv$-noncollapsing  condition (\ref{con:Noncollapsed}) and
 the bound (\ref{con:Bounded_Ricci}) on the Einstein constant.
Then there exists $\epsilon_0(n,\tv,p)>0$, $\eta_0(n,\tv,p)>0$ such that if
\begin{equation}\label{e:cc}
d_{GH}(B_{r}(x),B_{r}((\underline{0},z^*)))<\eta_0 r\qquad (\ell\geq n-2p)\, ,
\end{equation}
\begin{equation}\label{e:sic}
r^{2p}\fint_{B_{r}(x)}|Rm|^{p}\leq \epsilon_0\, ,
\end{equation}
then on $B_{\frac{1}{2}r}(x)$ there exists a harmonic coordinate system
in which the $g_{ij}$ and $g^{ij}$ have definite $C^k$ bounds, for all $k$.  In
particular,
$r_{har}(x)\geq c(n) r$.

\begin{proof}[Proof of Theorem \ref{t:Effective_Curvature_Estimate2}]
Note that since the $\epsilon$-regularity theorem requires that two independent
conditions hold simultaneously, we must control the collection
of balls on which either one of them fails to hold.

 Fix $\epsilon_0$ as above and let $\cD_{\epsilon_0,r}$ denote the union
of the balls  $B_r(x)$ with $x\in B_{\frac{1}{2}}(\ux)$,
  for which (\ref{e:sic}) fails to hold.
 By a standard covering argument
 it follows from the $L_p$ curvature bound (\ref{con:L^p_Curvature_Bound})
that  $\cD_{\epsilon_0,r}\cap B_{\frac{1}{2}}(\underline{x})$
can be covered by a collection of balls $\{B_{r}(x_i)\}$ such that we have
\begin{equation}
\label{e:vs}
\Vol(\cD_{\epsilon_0,r}\cap B_{\frac{1}{2}}(\underline{x}))\leq
\sum_i \Vol(B_{r} (x_i))\leq c(n)C\epsilon_0^{-1}\cdot r^{2p}\, .
\end{equation}

 In particular, for $\gamma$ as in Section \ref{s:ip}, $\eta=\eta_0<1$ and $k=n-2p-1$,
by applying Corollary \ref{rescaled}
to each the balls $B_r(x_i)$ whose union covers
$\cD_{\epsilon_0,\gamma^i}$ and summing the resulting
estimates we get  for all $1\leq i<j$,
\begin{equation}
\label{e:individual}
\Vol(\cS^{n-2p-1}_{\eta,\gamma^j,\gamma^{i}}\cap \cD_{\epsilon_0,\gamma^{i-1}})
 \leq c(n,\tv,\eta_0,C)(\gamma^j)^{2p+1-\eta_0}\cdot (\gamma^{i})^{1+\eta_0}\,  .
\end{equation}
Summing  (\ref{e:individual}) over $1\leq i\leq j$
and bounding  the right-hand side in terms of  the   geometric series with ratio
$\gamma^{1+\eta_0}$ gives
\begin{equation}
\label{e:Db}
\sum_{1\leq i\leq j}\Vol(\cS^{n-2p-1}_{\eta_0,\gamma^j,\gamma^i}\cap
\cD_{\epsilon_0,\gamma^{i-1}})
\leq c(n,\tv,\eta_0,C,p)(\gamma^j)^{2p+1-\eta_0}\, .
\end{equation}

We claim that
\begin{equation}
\label{e:sscover}
\cB_{\gamma^j}\cap B_{\frac{1}{2}}(\ux)\subset
\left(\cS^{n-2p-1}_{\eta_0,\gamma^j}\cap B_{\frac{1}{2}}(\ux)\right) \cup
\left(\bigcup_{1\leq i\leq j}\cS^{n-2p-1}_{\eta_0,\gamma^j,\gamma^i}\cap
\cD_{\epsilon_0,\gamma^{i-1}}\right)
\cup \cD_{\epsilon_0,\gamma^j}\, .
\end{equation}
This will suffice to complete the proof of Theorem
\ref{t:Effective_Curvature_Estimate2}
for the case $r=\gamma^j$,
since by (\ref{e:Db}), together with (\ref{e:vs}) for $r=\gamma^j$ and
Theorem \ref{t:S^k_Quantitative_Est} for $r=\gamma^j$, it follows that
the volume of the set on the right-hand side is 
$\leq c(n,tv,\eta_0,\epsilon_0,p,C)(\gamma^j)^{2p}$. As in the proof of
Theorem \ref{t:S^k_Quantitative_Est} the general case  follows directly from the
case $r=\gamma^j$.

Let $A'$  denote the complement of $A$.
To establish the claim, we note that the complement of the set on right-hand side of
(\ref{e:sscover}) is equal to
$$
 ((\cS^{n-2p-1}_{\eta_0,\gamma^j})' \cup B_{\frac{1}{2}}(\ux)')\cap
\left(\bigcap_{0\leq i\leq j}(\cS^{n-2p-1}_{\eta_0,\gamma^j,\gamma^i})'\cup
(\cD_{\epsilon_0,\gamma^{i-1}})'\right)
\cap  (\cD_{\epsilon_0,\gamma^j})'
\, .
$$
By expanding out  and
 dropping the terms which start with
$B_{\frac{1}{2}}(\ux)'$, we obtain an expression
that is a union  of terms, each of which is  of the form
\begin{equation}
\label{e:transition}
\begin{aligned}
(\cS^{n-2p-1}_{\eta_0,\gamma^j})' \cap \cdots\cap
&(\cS^{n-2p-1}_{\eta_0,\gamma^j,\gamma^i})'\cap (\cD_{\epsilon_0,\gamma^{i}})'
\cap \left(\bigcap_{i<\ell\leq j}(\cD_{\epsilon_0,\gamma^{\ell}})'\right)
\cap   (\cD_{\epsilon_0,\gamma^j})'\\
&{}\\
\subset\, &(\cS^{n-2p-1}_{\eta_0,\gamma^j,\gamma^i})'\cap
(\cD_{\epsilon_0,\gamma^{i}})'
\cap(\cD_{\epsilon_0,\gamma^{{i+1}}})'\cap \cdots\cap  (\cD_{\epsilon_0,\gamma^j})'\, .
\end{aligned}
\end{equation}
for some $i$ with $1\leq i\leq j$. (The terms represented by the dots
can be either $\cS$'s or $\cD$'s.)
By  (\ref{e:cc}), (\ref{e:sic}), the set
 on the second line of (\ref{e:transition}) satisfies the hypothesis of the
$\epsilon$-regularity theorem of
\cite{MR1978491}, so this completes the proof.
\end{proof}


\bibliography{rgcn}
\bibliographystyle{alpha}

\end{document}